\newtheorem{lem}{Lemma}
\newtheorem{thm}{Theorem}
\newtheorem{rem}{Remark}
\DeclareMathOperator \re {Re}
\DeclareMathOperator \im {Im}
\newcommand{\Real}{\mathbb{R}}
\newcommand{\Integers}{\mathbb{Z}}
\newcommand{\Complex}{\mathbb{C}}
\newcommand{\loc}{\operatorname{loc}}
\newcommand{\CI}{\mathcal{C}^{\infty}}
\newcommand{\mcd}{\mathcal{D}}
\numberwithin{equation}{section}
\numberwithin{lem}{section}
\newcommand{\tR}{\tilde{R}}
\title[Low energy resolvent asymptotics of the Aharonov--Bohm Hamiltonian] {Low energy resolvent asymptotics of the multipole Aharonov--Bohm Hamiltonian}
\author{T. J. Christiansen, K. Datchev, and M.  Yang}
\address{Department of Mathematics, University of Missouri, Columbia, MO 65211 USA}
\email{christiansent@missouri.edu}
\address{Department of Mathematics, Purdue University, West Lafayette, IN 47907 USA}
\email{kdatchev@purdue.edu}
\address{Department of Operations Research \& Financial Engineering, Princeton University, Princeton, NJ 08544 USA}
\email{yangmx@princeton.edu}
\begin{document}
\begin{abstract}
We compute low energy asymptotics for the resolvent of the Aharonov--Bohm Hamiltonian with multiple poles for both integer and non-integer total fluxes. For integral total flux we 
reduce to prior results in black-box scattering 
while for non-integral total flux we
build on the corresponding techniques using an 
appropriately chosen model resolvent. The resolvent expansion can be used to obtain  long-time wave asymptotics for the Aharonov--Bohm Hamiltonian with multiple poles. 
An interesting phenomenon is that if the 
total flux is an integer then the 
scattering resembles even-dimensional Euclidean scattering,
while if it is half an odd integer then it resembles odd-dimensional Euclidean scattering. 
The behavior for 
 other values of total flux thus provides an `interpolation' between these. 
\end{abstract}
\maketitle

\section{Introduction}

We prove resolvent expansions near zero energy for the Aharonov--Bohm Hamiltonian \cite{ab} with multiple poles on $\mathbb R^2$. Let
\begin{gather*}
     P = (-i \vec \nabla - \vec A)^2, \qquad \vec A = \sum_{k=1}^n \alpha_k \vec A_0(x-x_k,y-y_k), \quad
     \vec A_0(x,y) = \frac{(-y,x)}{x^2+y^2} = \vec \nabla \arg(x+iy),
\end{gather*}
where $\alpha_k, x_k, y_k\in \Real$. Let $s_k = (x_k,y_k)$ and $S = \{s_1,\dots,s_n\}$ be the poles of the vector potential $\Vec{A}$. We equip $P$ with its Friedrichs domain $\mathcal D$, and we assume for convenience that $s_1$ is the origin. 

The low energy resolvent asymptotics of $P$ are governed by the value of the \textit{total flux}, defined by
\[
 \beta= \alpha_1+ \cdots + \alpha_n.
\]
Before stating our main results, we state an application to wave asymptotics from \cite{cdy}.

\subsection{Wave asymptotics}
Consider the solution $u=U(t)f_1:=\frac{\sin{t\sqrt{P}}}{\sqrt{P}}f_1$ of the wave equation 
\begin{equation}
\label{eq:waveIntro}
    \begin{cases}
    (D_t^2-P) u=0,\\
    u\rvert_{t=0}=0,\ \partial_tu\rvert_{t=0}=f_1\in\CI_c(\Real^2\setminus S).
    \end{cases}
\end{equation}
As usual, the more general problem $(D_t^2-P) u=f$, $ u\rvert_{t=0}=f_0$, $\partial_tu\rvert_{t=0}=f_1$, can then be treated by writing $u(t) = U'(t)f_0 + U(t)f_1 + \int_0^t U(t-s)f(s)ds$, but for simplicity we do not pursue this here. 
Denote functions locally in the domain $\mathcal{D}$ of $P$ by $\mathcal{D}_{\loc}.$
\begin{thm} \label{t:wintro}Let $\chi \in C_c^\infty(\Real^2)$, and $f_1,\;u$ be as in \eqref{eq:waveIntro}.  
Suppose no three elements of $S$ are colinear.  Then, as $t\rightarrow \infty$,
 \begin{enumerate}
\item \label{odd} if $\beta$ is  half an odd integer, then there is a constant $c>0$ so that $\| \chi u(t)\|_{L^2} = O(e^{-c t})$.
\item if $2\beta\not \in \Integers$, set $\mu_m= \min(\beta -\lfloor \beta \rfloor, 1+\lfloor \beta \rfloor -\beta)$, $\mu_M= \max(\beta -\lfloor \beta \rfloor, 1+\lfloor \beta \rfloor -\beta)$.   Then there is a  function $\tilde{u}\in \mathcal{D}_{\loc}$ such that  
\begin{equation*}
\| \chi (u(t) -\tilde{u}t^{-1-2\mu_m})\|_{L^2} = O(t^{-1-4\mu_m})+ O(t^{-1-2\mu_M}).
\end{equation*}
\item \label{even} if $\beta \in \Integers$, then there is a  function $\tilde{u}\in \mathcal{D}_{\loc}$ such that 
$$ 
\| \chi (u(t) -\tilde{u}t^{-1}(\log t)^{-2})\|_{L^2} = O(t^{-1}(\log t)^{-3}).$$
\end{enumerate}
\end{thm}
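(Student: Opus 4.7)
The plan is to derive the wave asymptotics from the low-energy resolvent expansion for $P$ established in the main body of this paper, via Stone's formula and contour deformation. First, using the spectral theorem one writes
\[
\chi u(t) = \frac{1}{\pi}\int_0^\infty \sin(t\lambda)\,\chi\bigl(R(\lambda)-R(-\lambda)\bigr)\chi f_1\,d\lambda,
\]
where $R(\lambda)=(P-\lambda^2)^{-1}$ denotes the meromorphically continued resolvent, and a smooth cutoff $\phi(\lambda)$ supported near $0$ decomposes the integral into low- and high-energy pieces. The high-energy contribution decays faster than any polynomial via repeated integration by parts in $\lambda$, using polynomial high-energy resolvent bounds available in the black-box framework together with the fact that $f_1\in\CI_c(\Real^2\setminus S)$.

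Into the low-energy piece one inserts the resolvent expansion proved in this paper, which schematically takes the form
\[
\chi R(\lambda)\chi \sim \begin{cases}\text{holomorphic in }\lambda, & \beta\in\tfrac12+\Integers,\\ A_0 + A_1\lambda^{2\mu_m} + A_2\lambda^{4\mu_m} + B_1\lambda^{2\mu_M}+\cdots, & 2\beta\notin\Integers,\\ A_0 + B_1(\log\lambda)^{-1}+B_2(\log\lambda)^{-2}+\cdots, & \beta\in\Integers,\end{cases}
\]
as operators between appropriate weighted spaces, matching the free-Laplacian expansions in odd dimension, a fractional interpolation, and even dimension, respectively. In case \eqref{odd}, holomorphicity of $\chi R(\lambda)\chi$ across $\lambda=0$ plus a resonance-free strip on the physical sheet permits contour deformation into $\{\im\lambda<0\}$, giving $\|\chi u(t)\|_{L^2}=O(e^{-ct})$. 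In case \eqref{even}, the $(\log\lambda)^{-1}$ singularity produces the $t^{-1}(\log t)^{-2}$ leading term by a standard Tauberian argument familiar from two-dimensional Euclidean scattering. In the intermediate case, each term $\lambda^a$ of the expansion contributes a term of order $t^{-1-a}$ after integration against $\sin(t\lambda)$, giving the leading asymptotic $t^{-1-2\mu_m}\tilde u$ and the error $O(t^{-1-4\mu_m})+O(t^{-1-2\mu_M})$ from the two next-order contributions.

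The main technical obstacle is the non-integer flux case. The resolvent lives on a branched cover of the $\lambda$-plane, so one must carefully track the phases of $\lambda^{2\mu_m}$ and $\lambda^{2\mu_M}$ when computing $R(\lambda)-R(-\lambda)$ across the real axis. The two distinct error rates reflect a genuine two-scale expansion in $\lambda^{2\mu_m}$ and $\lambda^{2\mu_M}$, so the resolvent expansion must be carried to sufficiently high order with remainders differentiable enough in $\lambda$ to survive integration against the oscillating factor; one cannot simply appeal to a Tauberian theorem as a black box. A secondary subtlety is that the colinearity hypothesis on $S$ presumably enters to rule out accidental resonances at zero and to guarantee the nonvanishing of the leading profile $\tilde u$, tying the geometric assumptions from the resolvent construction directly to the leading order of the wave asymptotics.
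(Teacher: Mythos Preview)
Your overall strategy---Stone's formula, high/low-energy splitting, and reading off the $t$-asymptotics term by term from the low-energy resolvent expansion---is exactly the route the paper indicates (the paper itself only sketches this and defers the details to \cite{cdy}, combining the high-energy strip bound from \cite{my} with Theorems~\ref{t:resint} and~\ref{t:resnonint}). So in that sense you have the right architecture.

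There is, however, one genuine misidentification. You speculate that the colinearity hypothesis on $S$ is there ``to rule out accidental resonances at zero and to guarantee the nonvanishing of the leading profile $\tilde u$.'' That is not where it enters. The absence of zero resonances is proved unconditionally in Lemmas~\ref{l:noresat0} and~\ref{l:nores}, with no assumption on the geometry of $S$; the low-energy expansions of Theorems~\ref{t:resint} and~\ref{t:resnonint} likewise carry no colinearity hypothesis. The hypothesis is needed for the \emph{high-energy} input: the bound $\|\chi R(\lambda)\chi\|\le C|\lambda|^{-1}$ on a strip $|\im\lambda|\le\varepsilon$, $|\re\lambda|\ge\lambda_0$, quoted from \cite[Theorem~1.1]{my}, is what requires the non-colinearity assumption. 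Without a resonance-free strip at high energy you cannot deform the contour in case~(\ref{odd}) to obtain exponential decay, and you cannot control the high-frequency tail uniformly in the other cases either.

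A related minor point: in case~(\ref{odd}) your treatment of the high-energy piece (``decays faster than any polynomial via repeated integration by parts'') would by itself only yield $O(t^{-N})$, not $O(e^{-ct})$. To get the exponential rate you must push the \emph{entire} contour, not just the low-energy part, into $\{\im\lambda<0\}$, and it is precisely the strip bound above (holding for all large $|\re\lambda|$) that makes this possible. So the colinearity assumption is not a side condition on the profile $\tilde u$; it is essential to the mechanism producing the decay rates in all three cases.
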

The exponential decay rate of the error in (\ref{odd}) is typical of the error we see in non-trapping odd-dimensional Euclidean scattering, while the decay rate of (\ref{even}) is typical of 
the even-dimensional Euclidean case.  This is a consequence of the structure of the resolvent expansion at $0$.

By Theorem 1.1 of \cite{my}, for every $\lambda_0>0$ there is $\varepsilon > 0$ such that  $\|\chi R(\lambda) \chi\|_{L^2} \le C |\lambda|^{-1}$ when $|\re \lambda| \ge \lambda_0$ and $|\im \lambda | \le \varepsilon$. Combining this with the low energy expansions of Theorems \ref{t:resint} and \ref{t:resnonint} below yields Theorem \ref{t:wintro}. See \cite{cdy} for details, including the further terms in the expansion in the second and third cases.

In the setting of Theorem \ref{t:wintro}, the long-time wave asymptotics are determined by the form of the low-energy expansion of the resolvent, which depends on the total flux $\beta$. This demonstrates the importance of the results in the next subsection.

\subsection{Low energy resolvent expansions}
Set $R(\lambda)=(P-\lambda^2)^{-1}:L^2(\Real^2)\rightarrow \mathcal{D}$ for $\im \lambda >0$.  It is shown in \cite[Section 3]{my} that as an operator from 
$L^2_c(\Real^2)$ to $\mcd_{\loc}$, this resolvent $R(\lambda)$ has a meromorphic continuation to $\Lambda$, the logarithmic cover of 
$\Complex \setminus \{0\}$.  Here we study the behavior of this resolvent near $\lambda=0$.

\subsubsection{Resolvent for integer total flux} 
Suppose first the total flux $\beta\in\mathbb Z$. In this case, as described in Section \ref{sec:int}, we can conjugate $P$ to an operator $\tilde{P}$ which is a compactly supported perturbation of the Laplacian on $\Real^2$.     
We will prove in Section \ref{s:if} that $\tilde{P}$ has no zero resonances or eigenvalues. 
As conjugation  does not change the form of the asymptotic expansion, applying  \cite[Theorem~2]{cd2} then yields the following resolvent expansion:

\begin{thm}\label{t:resint} If $\beta \in \mathbb Z$, then there are operators $B_{2j,k} \colon L^2_{c}(\mathbb R^2) \to \mathcal D_{\text{loc}}$ (i.e. mapping compactly supported functions in $L^2(\mathbb R^2)$ to functions which are locally in the domain of $P$) and a constant $a$, such that, for every $\chi \in C_0^\infty(\mathbb R^2)$, we have
\begin{equation}\label{e:resexpint}\begin{split}
\chi R(\lambda) \chi &=  \sum_{j=0}^\infty   \sum_{k=-j-1}^{j} \chi B_{2j,k} \chi  \lambda^{2j} (\log \lambda-a)^k \\
&= \chi B_{0,0} \chi + \chi  B_{0,-1} \chi (\log \lambda -a)^{-1} + \chi  B_{2,1} \chi  \lambda^2 (\log \lambda -a)  + \cdots,
\end{split}\end{equation}
with the series converging absolutely in the space of bounded operators $L^2(\mathbb R^2) \to \mathcal D$,  uniformly on sectors near zero.
\end{thm}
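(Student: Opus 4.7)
The plan is to reduce Theorem \ref{t:resint} to the black-box resolvent expansion \cite[Theorem 2]{cd2} by a gauge transformation, along the lines sketched in the paragraph preceding the statement. I would carry it out in four steps.

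First, construct a smooth unitary multiplication operator $U$ on $L^2(\Real^2)$ such that $\tilde P := U^* P U$ equals $-\Delta$ outside a compact set. Fix a compact $K$ containing $S$ in its interior. On $\Real^2\setminus K$ the 1-form associated to $\vec A$ is closed, and its single period, around the generator of $H_1$, is $\oint \vec A\cdot d\vec r = 2\pi\sum_k\alpha_k = 2\pi\beta$. Since $\beta\in\Integers$, there is a multi-valued primitive $\psi$ of $\vec A$ on $\Real^2\setminus K$ for which $e^{i\psi}$ is smooth and single-valued. Pick $\eta\in C^\infty(\Real^2)$ with $\eta\equiv 0$ near $K$ and $\eta\equiv 1$ outside a slightly larger set $K'$; then $\eta\psi$ extends smoothly to $\Real^2$, and $U := e^{i\eta\psi}$ is the desired unitary. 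A direct computation gives $U^*(-i\vec\nabla-\vec A)U = -i\vec\nabla + \vec\nabla(\eta\psi)-\vec A$, which reduces to $-i\vec\nabla$ wherever $\eta\equiv 1$, so $\tilde P$ (taken on the domain $U^*\mcd$) equals $-\Delta$ on $\Real^2\setminus K'$.

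Second, $\tilde P$ fits the Sj\"ostrand--Zworski black-box framework used in \cite{cd2}: it is self-adjoint on $L^2(\Real^2)$ and coincides with $-\Delta$ outside a compact set. Its resolvent $\tR(\lambda):=(\tilde P-\lambda^2)^{-1}$ satisfies $\tR(\lambda)=U^* R(\lambda) U$, so its meromorphic continuation to $\Lambda$ is inherited from the one established in \cite[Section 3]{my}. Third, to invoke \cite[Theorem 2]{cd2} one needs $\tilde P$ to have no zero eigenvalue and no zero resonance. The plan (executed in Section \ref{s:if}) is to pull any bounded putative zero mode of $\tilde P$ back to a solution $u\in\mcd_{\loc}$ of $Pu=0$ with the analogous control at infinity, then integrate by parts against a cutoff exhausting $\Real^2\setminus S$, absorbing the boundary contributions at infinity and at each pole using the Friedrichs conditions at $s_k$, conclude $(-i\vec\nabla-\vec A)u=0$, and finally use the holonomy of this flat connection (governed by $e^{2\pi i\alpha_k}$ around each $s_k$) together with the local Friedrichs behavior to force $u\equiv 0$.

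With steps one through three in hand, \cite[Theorem 2]{cd2} produces
\begin{equation*}
\chi\tR(\lambda)\chi = \sum_{j=0}^\infty\sum_{k=-j-1}^{j}\chi \tilde B_{2j,k}\chi\,\lambda^{2j}(\log\lambda-a)^k,
\end{equation*}
for an appropriate constant $a$, converging absolutely and uniformly on sectors as claimed. Conjugating by the $\lambda$-independent bounded unitary $U$ yields \eqref{e:resexpint} with $B_{2j,k}=U\tilde B_{2j,k}U^*$ and preserves the convergence. The main obstacle is unambiguously step three: although $-\Delta$ has no zero modes, $\tilde P-(-\Delta)$ is not a standard lower-order perturbation, and encodes, through the gauge and Friedrichs extension, the singular local behavior at each pole with $\alpha_k$ possibly non-integer, so one must rule out the possibility that these local singularities generate an exotic bounded zero mode.
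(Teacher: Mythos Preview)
Your overall strategy---gauge $P$ to a compactly supported black-box perturbation of $-\Delta$, verify the no-zero-resonance hypothesis, apply \cite[Theorem~2]{cd2}, and conjugate back---is precisely the paper's. The gap is in your Step~1. The primitive $\psi$ is defined on the annulus $\Real^2\setminus K$ only up to addition of $2\pi\beta\Integers$; while this makes $e^{i\psi}$ single-valued, it does \emph{not} make $\eta\psi$ single-valued: in the transition region where $0<\eta<1$ the ambiguity in $\eta\psi$ is $2\pi\beta\eta\cdot\Integers$, so $e^{i\eta\psi}$ is multivalued there unless $\beta=0$. Topologically, $e^{i\psi}\colon\Real^2\setminus K\to S^1$ has winding number $\beta$ around $K$, so for $\beta\ne 0$ it admits no continuous unimodular extension to all of $\Real^2$; your $U$ does not exist as written.

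The paper circumvents this obstruction not with a cutoff but with slits: it removes the segments $\Gamma=\bigcup_{k\ge 2}\Gamma_k$ joining $s_1$ to the other poles, so that $\Omega=\Real^2\setminus\Gamma$ is simply connected and $f=\int_\gamma\vec A$ is a genuine single-valued function there. Then $\tilde P=e^{-if}Pe^{if}=-\Delta$ on \emph{all} of $\Omega$, with the flux data transferred into the matching conditions \eqref{eq:matching} across the slits; since $\Gamma$ has measure zero, $L^2(\Omega)=L^2(\Real^2)$ and $\tilde P$ is a black-box perturbation in the usual sense. Your Step~3 is also dispatched more simply than your sketch suggests: since $\tilde P u=0$ now means $u$ is harmonic on $\Omega$, the bounded expansion \eqref{eq:harmexp} at infinity together with the quadratic-form identity \eqref{e:quadform} (the slit boundary contributions cancel exactly by \eqref{eq:matching}) give $\int_{\Omega\cap D_\rho}|\nabla u|^2=\int_{\partial D_\rho}u\,\partial_r\bar u=O(\rho^{-1})$, hence $\nabla u\equiv 0$, hence $u\equiv 0$ by the Friedrichs condition at $S$. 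No holonomy analysis is needed.
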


\noindent\textbf{Remarks.} 1. Our proof also shows that if $k \ne 0$, then $B_{2j,k}$ has  finite rank. Moreover, there is a unique function $G$ in  $\mathcal D_{\text{loc}}$ such that $PG = 0$, $(\log |(x,y)| - e^{-if}G(x,y))$ is bounded as $|(x,y)| \to \infty$, with $f$ as in \eqref{e:fdef}, and 
\begin{equation}\label{e:adef}
 B_{0,-1} = \frac 1 {2\pi} G \otimes G, \quad  a = \log 2 - \gamma - C_{\vec A} + \frac {\pi i }2,  \quad C_{\vec A} := \lim_{|(x,y)|\to\infty}\left[ \log|(x,y)| - e^{-if}G(x,y)\right].
\end{equation}
Here  $\gamma$ is Euler's constant, given by $\gamma = - \Gamma'(1) = 0.577\dots$. We are unaware of a technique for computing $G$ and $C_{\vec{A}}$ in general. However, these are analogues of the Green's function and logarithm of the logarithmic capacity which appear in the corresponding formulas for Dirichlet obstacles  \cite{cd}, and in certain symmetric situations the computations are the same.  See Section~\ref{s:examples} for  examples.

\noindent 2. We used the following definition, which will recur below: Given functions $f_n$ mapping $\Lambda$ to a Banach space $\mathcal B$, we say  $\sum_n f_n(\lambda)$ \textit{converges absolutely in $\mathcal B$, uniformly on sectors near zero} if, for any $\varphi>0$, there is $\lambda_1 >0$ such that  $\sum_n \|f_n(\lambda)\|_{\mathcal B}$ converges uniformly   on $\{\lambda \in \Lambda \colon 0<|\lambda| \le \lambda_1 \text{ and } |\arg \lambda| \le \varphi\}$.

\noindent 3. If we define the scattering matrix and scattering phase by equations (1.4) and (1.7) of \cite{cd}, then the conclusions of Theorems 2 and 3 of \cite{cd} hold for them as well. We will not discuss these in detail as they are not the main focus of the paper.

\subsubsection{Resolvent for non-integer total flux} Now suppose the total flux $\beta\notin \mathbb Z$. The operator $P$ then cannot be conjugated to a compactly supported perturbation of the free Laplacian on $\Real^2$.  We shall
see in Section \ref{sec:nonint} that we can instead conjugate $P$ to a compactly supported perturbation of an Aharonov--Bohm Hamiltonian $P_{\beta}$ with a single pole at $s_1$ and flux $\beta$.
By studying the resolvent of $P_{\beta}$ and using some resolvent identities of Vodev, we obtain
\begin{thm}\label{t:resnonint} Suppose $\beta \not \in \mathbb Z$. Set
$\mu_m= \min(\beta -\lfloor \beta \rfloor, 1+\lfloor \beta \rfloor -\beta)$, $\mu_M= \max(\beta -\lfloor \beta \rfloor, 1+\lfloor \beta \rfloor -\beta)$.
There are operators $B_{j,k}, \; B_{j,k}' \colon L^2_{c}(\mathbb R^2) \to \mathcal D_{\text{loc}}$  such that, for every $\chi \in C_0^\infty(\mathbb R^2)$, we have
\begin{equation}\label{e:resexpnonint}\begin{split}
 \chi R(\lambda) \chi&=  \sum_{j=0}^\infty  \sum_{k=0}^{\infty}  \chi B_{j,k}\chi  \lambda^{2(j+ k \mu_m)}+
\sum_{j=0}^\infty \sum_{k=1}^\infty \chi B_{j,k}' \chi  \lambda^{2(j+ k \mu_M)} ,
\end{split}\end{equation}
with the series converging absolutely in the space of bounded operators $L^2(\mathbb R^2) \to \mathcal D$, uniformly in 
$|\lambda|<\epsilon$ for some $\epsilon>0.$ Moreover, if $k>0$ then $B_{j,k}$ and $B_{j,k'}$  have finite rank.
\end{thm}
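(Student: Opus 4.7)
The strategy is the one indicated in the paragraph preceding the statement. First, by the construction in Section~\ref{sec:nonint}, after conjugation $P$ becomes an operator $\tilde P = P_\beta + V$ differing from the single-pole Aharonov--Bohm model $P_\beta$ (pole at $s_1$, flux $\beta$) by a compactly supported perturbation $V$. Because conjugation preserves the form of the low-energy expansion, it is enough to produce \eqref{e:resexpnonint} for $\chi(\tilde P-\lambda^2)^{-1}\chi$. The two remaining steps are (a)~deriving an explicit low-energy expansion for the model resolvent $R_\beta(\lambda) := (P_\beta-\lambda^2)^{-1}$, and (b)~transferring it to $R(\lambda)$ via a Vodev-style resolvent identity.

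For (a) I would separate variables in polar coordinates centered at $s_1$. The operator $P_\beta$ then decomposes as an orthogonal sum over angular modes $m\in\Integers$, each a Bessel-type operator of index $\nu_m := |m-\beta|$. Since $\beta\notin\Integers$, every $\nu_m$ is strictly positive and non-integer, so the Schwartz kernel $J_{\nu_m}(\lambda r_<)H^{(1)}_{\nu_m}(\lambda r_>)$ of the $m$-th sector has a single-valued power-series expansion at $\lambda=0$ with two kinds of terms: smooth powers $\lambda^{2j}$ coming from the $J\!\cdot\!J$ part of $J\!\cdot\!Y$, and singular-exponent powers $\lambda^{2(j+\nu_m)}$ coming from the $J_{-\nu_m}$ piece of $Y_{\nu_m}=(\cos(\nu_m\pi)J_{\nu_m}-J_{-\nu_m})/\sin(\nu_m\pi)$. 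Because $\{\nu_m:m\in\Integers\}=\{\mu_m,\mu_M,1+\mu_m,1+\mu_M,\dots\}$, after sandwiching by $\chi$ the resulting expansion takes exactly the form $\chi R_\beta(\lambda)\chi=\sum_j C_j\lambda^{2j}+\sum_j C_j^{(m)}\lambda^{2(j+\mu_m)}+\sum_j C_j^{(M)}\lambda^{2(j+\mu_M)}$, with the $C_j^{(m)}$ and $C_j^{(M)}$ of finite rank since each comes from a single angular mode.

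For (b), choose auxiliary cutoffs $\chi_0,\chi_1\in C_c^\infty$ with $\chi_0 V=V$ and $\chi_1=1$ on $\supp\chi_0$, and use Vodev's form of the resolvent identity
\[
 \chi R(\lambda)\chi = \chi R_\beta(\lambda)\chi - \chi R_\beta(\lambda)V\bigl(I+\chi_1 R_\beta(\lambda)V\bigr)^{-1}\chi_1 R_\beta(\lambda)\chi.
\]
The plan is to show that $I+\chi_1R_\beta(0)V$ is invertible on a suitable weighted $L^2$ space (equivalently, that $\tilde P$ has no zero eigenvalue or zero resonance), so that $(I+\chi_1R_\beta(\lambda)V)^{-1}$ is obtained near $\lambda=0$ by Neumann series in the small quantity $\chi_1(R_\beta(\lambda)-R_\beta(0))V$. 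Substituting the model expansion into this formula and multiplying out produces arbitrary products of $\lambda^{2\mu_m}$ and $\lambda^{2\mu_M}$ factors, i.e.\ every exponent of the form $2(j+k_1\mu_m+k_2\mu_M)$. Using the identity $\mu_m+\mu_M=1$, each such mixed exponent is rewritten as $2(j'+k'\mu_m)$ with $k'\geq0$ when $k_1\geq k_2$, and as $2(j'+k'\mu_M)$ with $k'\geq1$ when $k_2>k_1$, matching the two sums in \eqref{e:resexpnonint} exactly. The finite-rank claim for $k>0$ is automatic, since every term contributing to such an exponent carries at least one of the finite-rank model factors $C_j^{(m)}$ or $C_j^{(M)}$.

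The main obstacle I anticipate is step (b): namely, verifying the $\lambda=0$ invertibility of $I+\chi_1R_\beta(0)V$ on a space simultaneously compatible with the Friedrichs domain $\mathcal D$ near the Aharonov--Bohm poles (where admissible functions vanish like $r^{\mu_m}$) and with the low-frequency behavior of $R_\beta$ at infinity, and then showing that the resulting Neumann series converges absolutely in the bounded operator norm uniformly in $|\lambda|<\epsilon$. The hypothesis $\beta\notin\Integers$ is essential throughout: it keeps $\nu_m>0$ for every $m$, rules out a logarithmic zero mode (whose presence is exactly what produces the $\log\lambda$ terms in Theorem~\ref{t:resint}), and guarantees that the Hankel expansion is a clean power series rather than the integer-order expansion with logarithms.
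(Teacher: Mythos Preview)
Your step (a) is essentially the paper's Lemma~\ref{l:modelr} (though you have the roles of the two Bessel pieces reversed: it is the $J_{\nu}J_{\nu}$ product that produces the $\lambda^{2\nu_m}$ powers and $J_{\nu}J_{-\nu}$ that gives the even-analytic part). The real problem is step (b). After conjugation one does \emph{not} obtain $\tilde P = P_\beta + V$ for any compactly supported multiplication (or even differential) operator $V$: on $\Omega = \Real^2\setminus\Gamma$ the differential expressions of $\tilde P$ and $P_\beta$ are identical, and the entire perturbation lives in the \emph{domain}---the matching conditions \eqref{eq:matching} across the segments $\Gamma_k$. Consequently the second-resolvent identity you write,
\[
\chi R(\lambda)\chi = \chi R_\beta(\lambda)\chi - \chi R_\beta(\lambda)V\bigl(I+\chi_1 R_\beta(\lambda)V\bigr)^{-1}\chi_1 R_\beta(\lambda)\chi,
\]
has no meaning here, and the scheme ``invert $I+\chi_1 R_\beta(0)V$ and run a Neumann series'' cannot be carried out as stated. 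This is precisely why the paper uses Vodev's black-box identity \eqref{e:vodevido}--\eqref{e:vodevid}, which only requires that $\tilde P$ and $P_\beta$ agree outside a compact set and makes no reference to an additive $V$; the role of $V$ is played instead by commutators $[P_\beta,\chi_1]$ with a cutoff, and the unknown resolvent $\tR(z)$ at an auxiliary point $z$ appears on the right-hand side.

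Even granting the correct identity, your plan to directly prove invertibility at $\lambda=0$ and then Neumann-expand is more delicate than you suggest. The paper does not do this: it first uses analytic Fredholm theory (Lemma~\ref{l:bexp}) to obtain an expansion of the form \eqref{eq:pexp} premultiplied by a possible $\lambda^{-\mu_0}$ singularity, and only afterwards rules out the singular prefactor by showing (Lemmas~\ref{l:singterm} and \ref{l:nores}) that its presence would force a nontrivial bounded zero mode of $\tilde P$, which cannot exist. Your instinct that the absence of zero resonances is the key analytic input is correct, but it enters the argument in this indirect way rather than through a direct inversion at $\lambda=0$.
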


\subsection{Meromorphic continuation of the resolvent}
As mentioned earlier, \cite[Section 3]{my} shows  
 $R(\lambda)$ has a meromorphic continuation to $\Lambda$, the logarithmic cover of 
$\Complex \setminus \{0\}$.  If, however, the total flux is a non-integral
rational number, our techniques easily show that the meromorphic 
continuation descends to a smaller Riemann surface.
\begin{thm}\label{t:riem}
    Suppose the total flux $\beta$ satisfies $\beta=p/q$, where $p$ and $q$ are coprime integers, and $q \ge 2$.  Then as an operator 
$L^2_c(\Real^2)\rightarrow \mcd_{\loc}$, $R(\lambda)$ has a meromorphic continuation to $\Lambda_q$, the minimal Riemann surface on which $\lambda$ and 
$\lambda^{2/q}$ are analytic functions.
\end{thm}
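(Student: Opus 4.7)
The plan is to leverage the construction of Section \ref{sec:nonint}, where $P$ is conjugated to a compactly supported perturbation $\tilde P$ of the single-pole Aharonov--Bohm Hamiltonian $P_\beta$, and $R(\lambda)$ is expressed through the model resolvent $R_\beta(\lambda) := (P_\beta - \lambda^2)^{-1}$ by a Vodev/Fredholm argument. The key observation is that, when $\beta = p/q$ in lowest terms, the meromorphic continuation of $R_\beta$ already descends from $\Lambda$ to $\Lambda_q$, and the Fredholm step transfers this descent to $R$.

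First I would verify that $R_\beta$ meromorphically continues to $\Lambda_q$. Separating variables in polar coordinates around $s_1$, the Schwartz kernel of $R_\beta$ decomposes into angular modes indexed by $m \in \Integers$, whose radial parts are expressed through Bessel and Hankel functions of order $\nu_m := |m-\beta| = |qm-p|/q$. Since each $\nu_m$ is an integer multiple of $1/q$, the branching factor $\lambda^{2\nu_m} = (\lambda^{2/q})^{|qm-p|}$ and the entire-in-$(\lambda r)^2$ remainders in the Bessel series are both single-valued on $\Lambda_q$. Controlling the angular sum by standard large-order Bessel asymptotics on compact subsets of $\Lambda_q$ away from poles then shows that $R_\beta$ is a meromorphic $L^2_c(\Real^2) \to \mcd_{\loc}$-valued function on $\Lambda_q$.

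Next I would transfer this to $R(\lambda)$: the Vodev-type identity used in Section \ref{sec:nonint} represents $\chi R(\lambda) \chi$ in terms of $\chi R_\beta(\lambda) \chi$ together with a Fredholm inversion $(I + K(\lambda))^{-1}$, where $K(\lambda)$ is a compact operator built from $R_\beta$ sandwiched against compactly supported cutoffs capturing $\tilde P - P_\beta$. Since $K(\lambda)$ inherits its $\lambda$-dependence from $R_\beta$, it is meromorphic on $\Lambda_q$; analytic Fredholm theory then meromorphically continues $(I + K(\lambda))^{-1}$, hence $R(\lambda)$, to $\Lambda_q$, with invertibility at one point inherited from the known continuation on the larger surface $\Lambda$.

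I expect the main obstacle to lie in the uniform convergence of the angular mode sum defining $R_\beta$ on compact subsets of $\Lambda_q$ away from $\lambda=0$, rather than merely near the origin: one needs large-$m$ estimates on $J_{\nu_m}(\lambda r)$ and $H^{(1)}_{\nu_m}(\lambda r)$ that are uniform in $\lambda$ on arbitrary compact sectors of $\Lambda_q$, so that each Fourier mode contributes a genuinely meromorphic operator-valued function and their sum does as well. Once this uniform convergence is in place, the Fredholm step and the descent to $\Lambda_q$ follow routinely.
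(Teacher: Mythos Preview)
Your proposal is correct and follows essentially the same route as the paper: the paper also observes (from the separation-of-variables formula for $R_\beta$ and the fact that each $\nu_l=|l+\beta|\in\tfrac1q\Integers$) that $R_\beta$ descends to $\Lambda_q$, and then transfers this to $\tR$ (hence $R$) via the Vodev identity \eqref{e:vodevid} and analytic Fredholm. The uniform-in-$l$ large-order Bessel asymptotics you anticipate needing are exactly the estimates \eqref{eq:JJ2} and \eqref{eq:JJn} established in the proof of Lemma~\ref{l:modelr}.
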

Theorem \ref{t:riem} could also be deduced from \cite[Section 3]{my}. Of particular interest is the case $q=2$, i.e. $\beta \not\in \mathbb Z$ but $2\beta \in \Integers$.  Then the meromorphic continuation is to $\Complex$, the double cover of the upper half-plane, 
just as in odd-dimensional Euclidean scattering.

\subsection{Background and context}
Low frequency resolvent expansions of Schr\"odinger operators have a long history in scattering theory, explicitly since the early results of MacCamy \cite{maccamy} and implicitly even before. Because in dimension two there are several types of resonance and eigenvalue at zero, each playing a different role, this dimension is more challenging than any other -- compare the papers \cite{JeKa,je80,je84,BGD,jn} which study this problem for Schr\"{o}dinger operators with  real-valued potentials decaying sufficiently fast at infinity in dimensions respectively three, at least five, four, two (with an additional restriction) and dimension no greater than two.  A rather general abstract approach to 
resolvent expansions near $0$ can be found in \cite{MuSt}.

For magnetic Schr\"odinger operators with bounded vector potentials satisfying certain decay conditions, resolvent expansions were established in \cite{Kov2} for two dimensions and more recently in \cite{je23} for three dimensions. Note that in our setting, unlike in that of \cite{Kov2}, the resolvent has a meromorphic continuation to $\Lambda$ or $\Lambda_q$ and a complete expansion at $\lambda=0$, and we are able to prove that the resolvent is regular at $\lambda=0$ rather than having it as an assumption. 
In \cite[Section 5]{gk} the authors study the resolvent of a single pole Aharonov-Bohm type operator, as well 
as considering an additional electric potential.  
To the best of the knowledge of the authors,  our paper is the first result on the low energy resolvent asymptotics of Aharonov-Bohm operators with multiple poles.

Wave decay results, similar to Theorem \ref{t:wintro}, have been much studied for decades. The field is too wide-ranging to survey here. Let us mention the seminal work of Morawetz \cite{m61}, and the surveys in  \cite[Epilogue]{lp89}, \cite[Chapter X]{va}, \cite{dr}, \cite{tat}, \cite{dz}, \cite{vasy}, \cite{sch21}, \cite{klainerman}. Some results in settings closer to the present one include \cite{Mur,Kov}. For the Aharonov-Bohm Hamiltonian, various wave decay results have only been established for the Hamiltonian with a single pole in \cite{fffp,gk,fzz,wang23}, which has scaling and rotational symmetry.

\subsection{Plan of the paper}
In Section \ref{s:pre}, we introduce some preliminaries of the Aharonov--Bohm Hamiltonian and the construction of the unitary conjugation operator for both integer and non-integer total flux. In Section \ref{s:if}, we prove Theorem \ref{t:resint} when the total flux is an integer. In Section~\ref{sec:res-non}, we prove Theorem \ref{t:resnonint} for non-integral total flux.

\subsection*{Acknowledgements}
The authors would like to thank Luc Hillairet for helpful discussions, as well as Daniel Tataru and Maciej Zworski for proposing this project and helpful discussions. The authors appreciate the helpful comments and corrections of
the referees. TC and KD are grateful for Simons collaboration grants for mathematicians for travel support. MY is partially supported by the NSF grant DMS-1952939 and DMS-2509989.

\subsection*{List of Notation}
\begin{itemize}
    \item The set of $n$ poles: $S = \{s_i=(x_i,y_i)\in\Real^2: 1\leq i\leq n\}$;
    \item $\Gamma = \Gamma_2\cup \cdots \cup \Gamma_n$, and each $\Gamma_k$ is the segment joining $s_1$ and $s_k$;
    \item $\Omega = \mathbb{R}^2\backslash\Gamma$; 
    \item The Aharonov--Bohm Hamiltonian $P = (-i\nabla-\Vec{A})^2$ on $\mathbb{R}^2\backslash S$ with Friedrichs domain $\mathcal{D}$, where $\Vec{A}$ is given in equation \eqref{potential};
    \item The conjugated operator we work with throughout the paper: $\tilde{P}=e^{-if} P e^{if}$ on $\Omega$ with the corresponding domain $\Tilde{\mathcal{D}}$;
    \item The ``free" Hamiltonian (model operator): $P_{\beta}= (-i\nabla-\beta\Vec{A_0})^2$ on $\mathbb{R}^2\backslash \{0\}$ with Friedrichs domain $\mathcal{D}_{\beta}$; the operator $\tilde{P}$ is a compactly supported perturbation of $P_{\beta}$; 
    \item The resolvent of $P$: $R(\lambda)=(P-\lambda^2)^{-1}$ on 
    $\Real^2\setminus S$;
    \item The resolvent of the conjugated operator $\tilde P$: $\tR(\lambda)=(\Tilde{P}-\lambda^2)^{-1}$ on $\Omega$;
    \item The ``free" resolvent (model resolvent): $R_{\beta}(\lambda)=(P_{\beta}-\lambda^2)^{-1}$ on $\mathbb{R}^2\backslash \{0\}$.
\end{itemize}

\section{Preliminaries}
\label{s:pre}
This section contains basic facts about $P$ and a construction
of the conjugated operator $\tilde{P}$.
\subsection{Operators and domains}
We study the magnetic Hamiltonian 
\begin{equation}
\label{TotHam}
    P = (-i \vec \nabla - \vec A)^2,
\end{equation}
on the space $X:=\Real^2\setminus S$, where $S=\{s_i=(x_i,y_i)|1\leq i\leq n\}$ corresponds to locations of the $n$ poles of $\vec A$ and
\begin{equation}
\label{potential}
     \vec A = \sum_{k=1}^n \alpha_k \vec A_0(x-x_k,y-y_k), \quad
     \vec A_0(x,y) = \frac{(-y,x)}{x^2+y^2} = \vec \nabla \arg(x+iy)
\end{equation}
with $\alpha_i\notin\mathbb{Z}$. Note that the magnetic potential $\Vec{A}$ is singular at $s_i$ for $1\leq i\leq n$ and curl-free; therefore there is no magnetic field in $\Real^2\setminus S$. The singular magnetic potential $\Vec{A}$ is related to the famous Aharonov--Bohm effect \cite{ab}. 

Note that the operator $P$ with domain $\CI_c(X)$ admits various self-adjoint extensions, as it is a positive symmetric operator defined on $\CI_c(X)\subset L^2(\Real^2)$ with deficiency indices $(2n,2n)$. In this paper we consider the Friedrichs self-adjoint extension, which is the only self-adjoint extension of $P$ whose domain $\mathcal{D}$ is contained in the closure of the quadratic form domain:
$$\overline{\left\{u\in L^2: \langle Pu,u\rangle_{L^2} + \|u\|^2_{L^2} <\infty \right\}}.$$
As a result, the domain can be characterized by
\begin{equation}
    \label{eq:dom}
    \mathcal{D}=\left\{u\in L^2: P u\in L^2, u(x)\to 0 \text{ as } x\to S\right\};
\end{equation}
see also \eqref{eq:intk} for the resolvent kernel formula when $n=1$.
Physically, the Friedrichs extension corresponds to the poles being impenetrable so that wave functions vanish there.  For detailed discussions of self-adjoint extensions of the Aharonov--Bohm Hamiltonian, see \cite{at,ds,f24}, or   \cite{cf23} on the Aharonov--Bohm Hamiltonian with multiple poles. 

\subsection{Conjugated operators}
To use  perturbation theory to study the resolvent expansion, we need to define a unitary conjugation to transform the operator $P$ nicely, in particular, outside a compact set.

\subsubsection{Integer flux}
\label{sec:int}
We first consider $\beta\in\mathbb Z$. Let $\Omega := \mathbb R^2 \setminus \Gamma$, where $\Gamma := \Gamma_2\cup \cdots \cup \Gamma_n$, and each $\Gamma_k$ is the segment joining $(x_1,y_1)$ and $(x_k, y_k)$. Fix some $(x_0,y_0) \in \Omega$. For each $(x,y) \in \Omega$, let 
\begin{equation}\label{e:fdef}
f(x,y):= \int_{\gamma} \Vec{A} \cdot d\Vec{\gamma}.
\end{equation}
where $\gamma$ is a path in $\Omega$ from $(x_0,y_0)$ to $(x,y)$. By the definition of $\Vec{A}$ in \eqref{potential}, changing the choice of path only changes the value $f$ by an integer multiple of $2 \pi$ and hence $e^{if}$ is independent of the choice of path. The conjugated operator we use is
\begin{equation} \label{e:peif}
 \tilde P := e^{-if} P e^{if} = (-i\vec \nabla + \vec\nabla f - \vec A)^2 =  -\Delta, \qquad  \text{ on } \Omega.
\end{equation}

Now we consider how the domain $\mathcal{D}$ of $P$ transforms, under the unitary conjugation, to the domain $\tilde{\mathcal{D}} $ of $\Delta$ on $\Omega$ using the definition \eqref{e:peif}. We define 
\begin{equation}
    \label{eq:conj_dom}
    \tilde{\mathcal{D}} := \{ v = e^{-if}\cdot (u\rvert_{\Omega}),\ u\in \mathcal{D} \}.
\end{equation}
Then $v\in \tilde{\mathcal{D}}$ extends continuously to $\partial \Omega$, provided we distinguish the two sides of each segment $\Gamma_j$. More specifically, $v\rvert_S = 0$, and if $z = (x,y) \in \Gamma \setminus S$, then we claim
\begin{equation}
  \label{eq:matching}
 v_+ = e^{-2\pi i \tilde \alpha} v_-, \quad \partial_\nu v_+ = - e^{-2\pi i \tilde \alpha} \partial_\nu v_-,\quad 2 \pi \tilde \alpha = f_+(z) - f_-(z)
\end{equation}
where $v_\pm(z) = \lim_{\varepsilon \to 0^+} v(e^{\pm i\varepsilon} z)$, and we are using the convention that the normal derivatives $\partial_{\nu}v_+$ and $\partial_\nu v_-$ are pointing in opposite directions. This is because for $v=e^{-if}u$, we have $v_-={e^{-if_-(z)}u}$ and $v_+={e^{-if_+(z)}u}$. Therefore $$v_- = e^{-i(f_-(z)-f_+(z))}v_+ = e^{2\pi i \tilde{\alpha}} v_+,$$ since for $\Vec{A}$ defined in \eqref{potential}
$$f_+(z)-f_-(z)=\oint_{\gamma_z}\Vec{A}\cdot d\gamma = 2\pi \tilde{\alpha}\; \mod 2\pi,$$
where $\gamma_z$ is a simple closed curve in $\Omega$
from $\lim_{\varepsilon \to 0^+}e^{-i\varepsilon} z$ to $\lim_{\varepsilon \to 0^+} e^{i\varepsilon} z$
and having positive orientation. Thus $\tilde \alpha(z) = \alpha_j$ when $z \in \Gamma_j$ and there is no $s_k$ such that $s_j$ lies on the segment between $s_1$ and $s_k$, and otherwise $\tilde \alpha(z)$ is a sum of all the $\alpha_j$ such that $z$ lies on the segment between $s_1$ and $s_j$; see Figure~\ref{f:graph} for an example.

\begin{figure}[ht]
 \includegraphics[width=6cm]{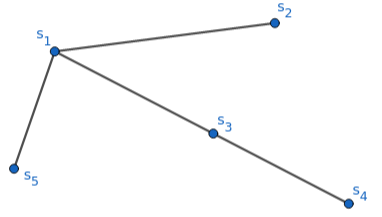}
\caption{In this example, $\tilde \alpha = \alpha_j$ on $\Gamma_j$ for $j = 2$ and $j=5$, $\tilde \alpha = \alpha_3 + \alpha_4$ on $\Gamma_3$, and $\tilde \alpha = \alpha_4$ on $\Gamma_4 \setminus \Gamma_3$.}\label{f:graph}
\end{figure}

\subsubsection{Non-integer flux}
\label{sec:nonint}
Now we consider the total flux $\beta\notin \mathbb Z$. Recall that $\Omega = \mathbb R^2 \setminus \Gamma$, with $\Gamma = \Gamma_2\cup \cdots \cup \Gamma_n$, and each $\Gamma_k$ is the segment joining $(x_1,y_1)$ and $(x_k, y_k)$. We define a new phase function in the conjugation. Fix some $(x_0,y_0) \in \Omega$. For each $(x,y) \in \Omega$, let 
$$f(x,y):= \int_{\gamma} (\vec A -  \beta \vec A_0) \cdot d\Vec{\gamma}$$
where $\gamma$ is a path in $\Omega$ from $(x_0,y_0)$ to $(x,y)$. By equation \eqref{potential}, the functions $f$ and $e^{if}$ are independent of the choice of path. In particular, we have
\begin{equation}
\label{eq:U-equiv}
    \tilde{P}:=e^{-if} P e^{if} = P_\beta,
\end{equation}
when restricted to $\Omega$, where $P_{\beta} = (-i\nabla-\beta\Vec{A_0})^2$ on $\mathbb{R}^2\backslash \{0\}$ is the Aharonov--Bohm Hamiltonian with one pole and flux $\beta$. Hence, the operator $\tilde P$ is a compactly supported perturbation of $P_{\beta}$. Note that as in the case of integral total flux, under the unitary transform the domain $\Tilde{\mathcal{D}}$ of $\tilde{P}$ defined via \eqref{eq:conj_dom} using the new phase function satisfies the matching condition \eqref{eq:matching}.

\section{Resolvent expansion for integer total flux}\label{s:if} 

\subsection{Proof of  Theorem \ref{t:resint}} The proof of Theorem \ref{t:resint} is short because it follows from \cite[Theorem 2]{cd2}, where the resolvent expansion of a compact black-box perturbation of Laplacian is given under a certain non-resonance condition. It suffices to verify that this condition holds.

More specifically, $\tilde P$ is a black-box perturbation of $-\Delta$ on $\mathbb R^2$ in the sense of \cite{sz} (see also Chapter 4 of \cite{dz}): it is self-adjoint, nonnegative, and has $\chi \tilde R(\lambda)$ compact on $L^2(\Omega)$ for $\chi \in C_c^\infty(\mathbb R^2)$ and $\im \lambda >0$ by the following lemma.

\begin{lem}\label{l:quadform} For all $u$ and $v$ in $\tilde{\mathcal D}$, we have
\begin{equation}\label{e:quadform}
 \int_{\Omega} \nabla u \cdot \nabla \bar v  = \int_\Omega (-\Delta u) \bar v.
 \end{equation}
 In particular, $\Tilde{\mathcal D} \subset H^1(\Omega)$.
\end{lem}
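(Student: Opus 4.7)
My plan is to bypass any direct integration by parts across the jumps on $\Gamma$ and instead deduce the lemma from the quadratic form construction of the Friedrichs extension. The starting observation is that because $\vec\nabla f = \vec A$ on $\Omega$, for any sufficiently smooth $v$ on $\Omega$ we have
\begin{equation*}
 (-i\vec\nabla - \vec A)(e^{if} v) = e^{if}\bigl(-i\vec\nabla v + (\vec\nabla f)v - \vec A v\bigr) = -i e^{if}\vec\nabla v.
\end{equation*}
Taking $L^2$ norms and using that $\Omega$ and $X = \mathbb R^2\setminus S$ differ only by a set of measure zero, we find that the Friedrichs form $q_P(u) := \|(-i\vec\nabla - \vec A) u\|_{L^2(X)}^2$ transforms under $u \mapsto v = e^{-if} u|_\Omega$ into the Dirichlet form $q_{\tilde P}(v) := \|\vec\nabla v\|_{L^2(\Omega)}^2$.

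To obtain the inclusion $\tilde{\mathcal{D}}\subset H^1(\Omega)$, I would use that $\mathcal{D}$ is contained in the form domain $Q(P)$, the completion of $\CI_c(X)$ in the norm $(q_P + \|\cdot\|_{L^2}^2)^{1/2}$. The unitary conjugation sends $Q(P)$ isometrically onto a closed subspace of $H^1(\Omega)$, namely the $H^1(\Omega)$-closure of $\{e^{-if}\phi|_\Omega : \phi\in \CI_c(X)\}$, where the matching condition \eqref{eq:matching} for $v$ itself is encoded by taking the closure in $H^1(\Omega)$ rather than in $H^1(\mathbb R^2)$. In particular $\tilde{\mathcal{D}} \subset H^1(\Omega)$, which is the second assertion of the lemma.

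The identity \eqref{e:quadform} then follows by invoking the standard relation between a Friedrichs operator and its form, together with $\tilde P = -\Delta$ on $\Omega$ from \eqref{e:peif}: for $u, v \in \tilde{\mathcal{D}}$,
\begin{equation*}
 \int_\Omega (-\Delta u)\bar v = \langle \tilde P u, v\rangle_{L^2(\Omega)} = q_{\tilde P}(u,v) = \int_\Omega \vec\nabla u\cdot \vec\nabla \bar v,
\end{equation*}
where the middle equality is the defining property of the operator associated to $q_{\tilde P}$ and the rightmost equality is the sesquilinear extension of the form on its domain $Q(\tilde P) \supset \tilde{\mathcal{D}}$.

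The one point deserving care is not any calculation but rather verifying that $\tilde{\mathcal{D}}$ as defined in \eqref{eq:conj_dom} actually coincides with the operator domain of the Friedrichs extension attached to $q_{\tilde P}$ on $\Omega$; this is immediate from the unitarity of the conjugation $u\mapsto e^{-if}u|_\Omega$ but should be stated explicitly. Once it is, no boundary term on $\Gamma$ is ever handled by hand: the matching condition on $v$ is automatic from the $H^1(\Omega)$-closure defining the form domain, and the matching condition on $\partial_\nu v$ is part of what the operator-domain condition $\tilde P v \in L^2(\Omega)$ enforces distributionally across $\Gamma$.
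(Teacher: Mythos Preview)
Your argument is correct and takes a genuinely different route from the paper. The paper applies Green's identity directly on $\Omega$, viewed as a union of sectors, and then checks that the boundary contributions from the two sides of each $\Gamma_j$ cancel pairwise via the matching conditions \eqref{eq:matching}: since $v_+ = e^{-2\pi i\tilde\alpha}v_-$ and $\partial_\nu u_+ = -e^{-2\pi i\tilde\alpha}\partial_\nu u_-$, the integrand $(\partial_\nu u_+)\bar v_+ + (\partial_\nu u_-)\bar v_-$ vanishes identically on $\Gamma\setminus S$. You instead transport everything through the unitary $e^{if}$ back to the Friedrichs form of $P$ on $X$, where the identity $\langle Pu,v\rangle = q_P(u,v)$ holds by construction and no boundary term is ever written down. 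The paper's version is shorter and makes transparent exactly how the transmission conditions enter; your version is more functorial, sidesteps the (minor) issue of justifying Green's identity for general elements of $\tilde{\mathcal D}$ before one already knows they lie in $H^1(\Omega)$, and carries over verbatim to the non-integer-flux conjugation of Section~\ref{sec:res-non}, where the paper instead repeats the hands-on argument as the magnetic Green's identity \eqref{e:mag_green}--\eqref{e:aux2}.
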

\begin{proof}
By Green's identity, since $\Omega$ is a union of sectors, we have
\begin{equation*}
  \int_{\Omega} \nabla u \cdot \nabla \bar v  = \int_{\Omega} (-\Delta u) \bar v + \int_{\partial \Omega}(\partial_{\nu}u ) \bar v,
\end{equation*}
where, as in \eqref{eq:matching}, we distinguish the two sides of each segment $\Gamma_j$ in $\partial \Omega$. But $\int_{\partial \Omega}(\partial_{\nu}u ) \bar v = 0$ because by \eqref{eq:matching} the function $(\partial_{\nu}u_+ ) \bar v_+ + (\partial_{\nu}u_- ) \bar v_-$ vanishes identically on $\Gamma \setminus S$.
\end{proof}

By the Remark following Theorem 2 of \cite{cd2}, to invoke that result it is now enough to check that $\Tilde{P}u=0$ has no bounded solutions in $\Tilde{\mathcal D}_{\textrm{loc}}$. For that, recall that  if $u$ is harmonic and bounded on $\{ x \in \mathbb R^2: \; |x|>\rho\}$, then there are constants $c_0$, $c_{j,c}$, $c_{j,s}$, such that
\begin{equation}\label{eq:harmexp}
 u(r\cos \theta, r\sin \theta)=c_0+ \sum_{j=1}^\infty (c_{j,c}\cos j\theta + c_{j,s} \sin j\theta)r^{-j}, \qquad \text{for }r>\rho.
\end{equation}

\begin{lem}\label{l:noresat0}
If $\tilde{P}u=0$ and $u \in \Tilde{\mathcal D}_{\textrm{loc}}$ is bounded, then $u$ is identically zero.
\end{lem}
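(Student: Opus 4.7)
My plan is to show $u \equiv 0$ via a Liouville-type energy argument, using the harmonic expansion \eqref{eq:harmexp} at infinity and an integration by parts on a large annular region.

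First, since $\Gamma$ is bounded, $u$ is harmonic and bounded on the exterior $\{|x| > \rho\}$ for some $\rho > 0$, so \eqref{eq:harmexp} gives $u(x) \to c_0$ with $\partial_r u = O(r^{-2})$. I would then apply Green's identity on the annular set $U_{R,\epsilon} := (\Omega \cap B_R) \setminus \bigcup_k \overline{B_\epsilon(s_k)}$: since $\Delta u = 0$,
\begin{equation*}
\int_{U_{R,\epsilon}} |\nabla u|^2 = \int_{\partial U_{R,\epsilon}} \bar u\, \partial_\nu u,
\end{equation*}
and the two-sided contributions from $\Gamma \cap U_{R,\epsilon}$ cancel by the matching \eqref{eq:matching}, exactly as in the proof of Lemma \ref{l:quadform}. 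Substituting \eqref{eq:harmexp} into the boundary integral on $\partial B_R$, the cross terms involving $c_0$ integrate to zero in $\theta$, and what remains is $-\pi \sum_{j\ge 1} j(|c_{j,c}|^2 + |c_{j,s}|^2) R^{-2j}$, which vanishes as $R \to \infty$.

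For the circular integrals on $\partial B_\epsilon(s_k)$, I would perform a local Fourier decomposition in the angular variable around $s_k$. The matching condition \eqref{eq:matching} encodes a holonomy of modulus one for $u$ around $s_k$, so $u$ admits an indicial expansion with radial factors $r^{\pm|m+\alpha_k|}$. Boundedness discards the negative-power modes, and the Friedrichs vanishing $u|_S = 0$ discards the constant mode when $\alpha_k \in \Integers$; the net outcome is $u = O(r^{\mu_k})$ and $\nabla u = O(r^{\mu_k-1})$ near $s_k$ with $\mu_k > 0$, so $\int_{\partial B_\epsilon(s_k)} \bar u\, \partial_\nu u\, dS = O(\epsilon^{2\mu_k}) \to 0$. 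Combining all the limits yields $\int_\Omega |\nabla u|^2 = 0$, so $u$ is constant on the connected set $\Omega$ (connected since $\Gamma$ is a tree of segments in $\mathbb{R}^2$), and the Friedrichs condition $u|_S = 0$ then forces this constant to equal zero.

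The main obstacle is the local analysis near the poles: one has to verify carefully that bounded Friedrichs-domain harmonic functions with the prescribed holonomy decay at a strictly positive rate at each pole, matching the allowed Fourier modes against both the holonomy imposed by \eqref{eq:matching} and the Friedrichs domain condition. Particular care is needed at $s_1$, where multiple cut segments meet and the local angular decomposition involves the several phases $e^{-2\pi i \tilde\alpha}$ on the various incident $\Gamma_j$'s.
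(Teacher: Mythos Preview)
Your argument is essentially the paper's---an energy/Liouville identity combined with the harmonic expansion \eqref{eq:harmexp} at infinity---but you take an unnecessary detour by excising balls $B_\epsilon(s_k)$ around the poles. The paper applies Green's identity directly on $\Omega \cap D_\rho$ with no excision: since $u \in \tilde{\mathcal D}_{\loc}$, Lemma~\ref{l:quadform} already gives $u \in H^1_{\loc}(\Omega)$, and on the slit Lipschitz domain $\Omega \cap D_\rho$ the only boundary pieces are $\partial D_\rho$ and the two sides of $\Gamma$; the latter cancel by \eqref{eq:matching} exactly as you note, and no separate contribution arises at the corner points $s_k$. Thus $\int_{\Omega\cap D_\rho}|\nabla u|^2 = \int_{\partial D_\rho} u\,\partial_r \bar u\,dS \to 0$ follows in one step, and the rest of your conclusion ($\nabla u \equiv 0$, connectedness of $\Omega$, Friedrichs vanishing at $S$) matches the paper.

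The ``main obstacle'' you flag---the local indicial analysis at $s_1$, where several cuts with different phases meet---is therefore entirely avoidable, and your proposal leaves it open. If you insist on the excised route, the cleanest way to handle $s_1$ is to undo the conjugation locally: writing $u = e^{-if}w$ with $w \in \mathcal D_{\loc}$ solving $Pw = 0$, the operator $P$ near $s_1$ is $P_{\alpha_1}$ plus a smooth first-order perturbation (the other poles contribute only a bounded vector potential there), so the indicial roots are unchanged and $|w| = O(r^{\mu})$, $|\nabla u| \le |\nabla w| + |\nabla f|\,|w| = O(r^{\mu-1})$ for some $\mu>0$, whence $\int_{\partial B_\epsilon(s_1)}\bar u\,\partial_\nu u\,dS = O(\epsilon^{2\mu}) \to 0$. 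But this is extra work the paper simply does not need.
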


\begin{proof}
Let $D_\rho = \{x \in \mathbb R^2 \colon |x|<\rho\}$. By \eqref{e:quadform} and \eqref{eq:harmexp},
\[
 \int_{\Omega \cap D_\rho} |\nabla u|^2 = \int_{\partial D_\rho} u\, \partial_r \bar u \, dS= O(\rho^{-1}), \qquad \text{ as } \rho \to \infty,
\]
which implies that $\nabla u$ is identically $0$. Since $u \to 0$ at points of $S$ by \eqref{eq:dom}, it follows that $u$ is identically zero.
\end{proof}

Lemma \ref{l:noresat0} shows that, in the notation of \cite[(1.3)]{cd2}, 
$\mathcal{G}_0=\{0\}$.  Since $\mathcal{G}_{-1}\subset \mathcal{G}_0$
it follows from \cite[Corollary 4.3 and Theorem 2]{cd2} 
that $\tR(\lambda)$
has an asymptotic expansion near $0$ of the form of \eqref{e:resexpint}.  Since $R(\lambda)= e^{if}\tR(\lambda)e^{-if}$, Theorem \ref{t:resint} follows.

\subsection{Examples}\label{s:examples} The following examples illustrate the function $G$ and the constant $C_{\Vec{A}}$ in \eqref{e:adef}. In these examples we use the standard identification of $\mathbb R^2$ with $\mathbb C$. Let $n\ge 3$ and $\rho>0$ be given, and define an Aharonov--Bohm potential and the cut $\Gamma$ as in Figure \ref{f:examples}.

\begin{figure}[ht]
 \includegraphics[width=15cm]{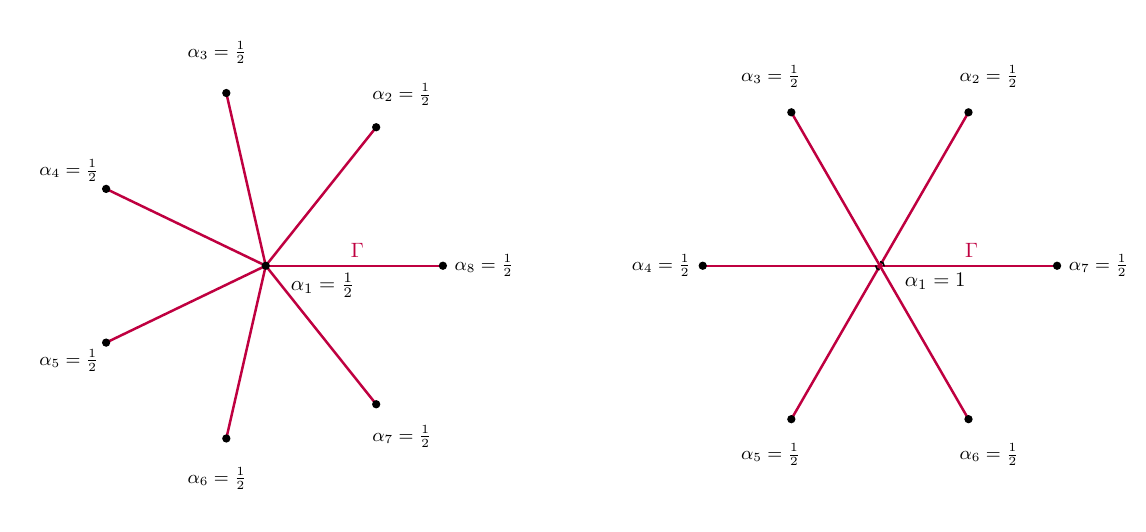}
\caption{Examples for which we can compute $G$ and $C_{\vec A}$.}
\label{f:examples}
\end{figure}

Explicitly:
\begin{itemize}
\item There are poles $s_k=\rho e^{2\pi i(k-1)/(n-1)}$, $k=2,\dots ,n$, each having flux $\alpha_k=1/2$.  
\item If $n$ is even, there is a pole at the origin with flux $\alpha_1=1/2$. If $n$ is odd, there is a (phantom) pole at the origin with flux $\alpha_1=1$.
\end{itemize}

Because we have taken the pole at the origin to be $s_1$, $\Gamma$  as defined in Section \ref{sec:int} is given by 
$$\Gamma=\bigcup_{k=1}^{n-1}e^{2\pi i(k-1)/(n-1)}[0,\rho].$$  
Because of the way the we have chosen the fluxes,  \eqref{eq:matching} simplifies to
\begin{equation}\label{eq:specialG}
v_+=-v_-,\;\; \partial_\nu v_+=\partial_\nu v_-.
\end{equation}

Let $\tilde{G}$ be the function in $\tilde{\mathcal{D}}_{\loc}$ which satisfies $\tilde{P}\tilde{G}=0$, $\tilde{G}(z)= \log|z|+O(1)$; uniqueness of such a function follows from Lemma \ref{l:noresat0}.  Note
that $\tilde{G}=e^{-if}G$, where $G$ is the function defined in \eqref{e:adef}.  

By the symmetry of $\Gamma$, uniqueness of $\tilde G$, and  \eqref{eq:specialG}, we have
\begin{equation}\label{e:Gsym}
 \tilde G(\bar z) = \tilde G(z) = \tilde G(e^{2\pi i(k-1)/(n-1)} z).
\end{equation}
Combining the first of \eqref{e:Gsym} with the first of \eqref{eq:specialG} shows that $\tilde G(z) \to 0$ as $z \to  z_0 \in (0,\rho)$. Using also the second of \eqref{e:Gsym} shows that $\tilde G(z) \to 0$ as $z \to  z_0 \in \Gamma$.  Since $\tilde G$ is harmonic on $\mathbb C \setminus \Gamma$,  it follows by Theorem 5.2.1 of  \cite{ransford} that $e^{C_{\vec{A}}}$ is the logarithmic capacity of $\Gamma$, i.e.  
 \[
 C_{\vec A} = \log (2^{-2/(n-1)}\rho).
 \]
More explicitly, when  $n=3$,   using the Joukowsky transform $z = w + \rho^2/4w$  yields  
\begin{equation}\label{e:gexplicit}
\tilde G(z) = \log \Bigg| \frac z \rho \Bigg(1 + \sqrt{1 - \frac {\rho^2}{z^2}}\Bigg)\Bigg|.
\end{equation}
The formula \eqref{e:gexplicit} can be adapted to other values of $n$  by taking the inverse image under a  complex polynomial as in \cite[Theorem 5.2.5]{ransford}. 

\section{Resolvent expansion for non-integer total flux}
\label{sec:res-non}

Now we consider the resolvent when $\beta\notin\mathbb Z$. For convenience, in this section
we assume that $0<|\beta|\le 1/2$ since the general case is unitarily equivalent to this one. 

\subsection{Model resolvent}
\label{sec:free}
First, we have the following lemma on the asymptotics of the ``model" resolvent $R_{\beta}(\lambda)=(P_\beta-\lambda^2)^{-1}$.  We denote
by $\mathcal{D}_\beta$ the domain of $P_\beta, $ and by 
$\mathcal{D}_{\beta;\loc} $ elements of $L^2_{\loc}$ which are locally in 
$\mathcal{D}_\beta$.  A related result for the 
resolvent for the Laplacian on cones can be found in 
\cite[Section 7.2]{MuSt}.

\begin{lem}\label{l:modelr}  There are operators
$A_0(\lambda),\; \tilde{A}_{\pm}(\lambda):L^2_c(\Real^2)\rightarrow \mathcal{D}_{\beta; \loc}$, depending analytically on 
$\lambda^2$
such that the model resolvent
\begin{equation}\label{eq:3term}
R_\beta(\lambda) = A_0(\lambda)+\lambda^{2|\beta|} \tilde{A}_+(\lambda) 
+\lambda^{2(1-|\beta|)}\tilde{A}_{-}(\lambda).
\end{equation}
 In particular, $R_\beta$
 has an expansion near zero of the form
\begin{equation}\label{e:modelresbeta}
 R_\beta(\lambda) = \sum_{j\geq 0} \Big(R_{2j,0} + R_{2j,-}  \lambda^{2|\beta|} + R_{2j,+} \lambda^{2(1-|\beta|)} \Big)\lambda^{2j}
\end{equation}
where the  $R_{2j,0}, $ $R_{2j,\pm}$ are operators mapping compactly supported $L^2$ functions to functions locally in $\mathcal{D}_{\beta}$ 
 and the series converges absolutely and uniformly near $\lambda = 0$.
Moreover, $R_{2j,\pm}$ are finite rank for each $j$.
\end{lem}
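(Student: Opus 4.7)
The plan is to exploit the rotational symmetry of $P_\beta$ and use the explicit Bessel function representation of its resolvent kernel. In the angular Fourier basis $e^{in\theta}$, $n\in\mathbb Z$, the operator $P_\beta$ reduces to a direct sum of radial operators $L_n = -\partial_r^2 - r^{-1}\partial_r + (n-\beta)^2/r^2$; the Friedrichs condition at the pole selects the solution $J_{|n-\beta|}(\lambda r)$ while the outgoing condition at infinity selects $H^{(1)}_{|n-\beta|}(\lambda r)$. A Wronskian computation then yields the standard formula
\begin{equation*}
R_\beta(\lambda)(r,\theta;r',\theta') = \frac{i}{4\pi}\sum_{n\in\mathbb Z} J_{|n-\beta|}(\lambda r_<)\,H^{(1)}_{|n-\beta|}(\lambda r_>)\,e^{in(\theta-\theta')}, \qquad r_<=\min(r,r'),\; r_>=\max(r,r').
\end{equation*}

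Next I would substitute the small-argument expansions $J_\nu(z) = z^\nu A_\nu(z^2)$, with $A_\nu$ entire, and, for non-integer $\nu$,
\begin{equation*}
H^{(1)}_\nu(z) = \frac{i}{\sin\nu\pi}\bigl(e^{-i\nu\pi}z^\nu A_\nu(z^2) - z^{-\nu} A_{-\nu}(z^2)\bigr).
\end{equation*}
Since $\beta\notin\mathbb Z$, the indices $|n-\beta|$ are never integers and no logarithmic terms appear. Each product $J_{|n-\beta|}(\lambda r_<)H^{(1)}_{|n-\beta|}(\lambda r_>)$ then splits cleanly into an analytic-in-$\lambda^2$ piece (with radial factor $r_<^{|n-\beta|} r_>^{-|n-\beta|}$) and a $\lambda^{2|n-\beta|}$ times analytic-in-$\lambda^2$ piece (with radial factor $r_<^{|n-\beta|} r_>^{|n-\beta|}$). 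Because $0<|\beta|\le 1/2$, every value $|n-\beta|$ is congruent mod $\mathbb Z$ to either $|\beta|$ or $1-|\beta|$, so gathering the non-analytic pieces by residue produces exactly the two branches $\lambda^{2|\beta|}\tilde A_+(\lambda)$ and $\lambda^{2(1-|\beta|)}\tilde A_-(\lambda)$ of \eqref{eq:3term}, while the analytic remainders sum to $A_0(\lambda)$.

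Convergence of the angular sum and its term-by-term expansion in $\lambda^2$ should follow from the bound $J_{|n-\beta|}(\lambda r) = O((\lambda r/2)^{|n|}/|n|!)$ for large $|n|$, giving super-polynomial decay uniformly on compact sets in $r$. For the finite rank of $R_{2j,\pm}$, I would note that only finitely many $n$ contribute to each non-integer exponent (for $\beta>0$, the modes $n\in\{0,-1,\ldots,-j\}$ contribute to $\lambda^{2(|\beta|+j)}$), and that within each mode the non-analytic factor is the \emph{symmetric} product $A_{|n-\beta|}(\lambda^2 r_<^2) A_{|n-\beta|}(\lambda^2 r_>^2)$. Its $\lambda^{2k}$ Taylor coefficient is a sum of terms $r_<^{\nu+2a} r_>^{\nu+2b}$ whose scalar coefficients are symmetric in $a\leftrightarrow b$, and the identity
\begin{equation*}
r_<^{\nu+2a} r_>^{\nu+2b} + r_<^{\nu+2b} r_>^{\nu+2a} = r^{\nu+2a}(r')^{\nu+2b} + r^{\nu+2b}(r')^{\nu+2a},
\end{equation*}
valid regardless of which of $r,r'$ is larger, collapses the $r_<,r_>$ distinction into a finite sum of rank-one kernels; coupled with the rank-one angular factor $e^{in(\theta-\theta')}$, this makes each $R_{2j,\pm}$ finite rank. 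By contrast, the analytic remainder $A_\nu A_{-\nu}$ has no such symmetry, so $R_{2j,0}$ need not be finite rank.

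The main technical obstacle is verifying the mapping properties into $\mathcal D_{\beta,\mathrm{loc}}$, in particular the Friedrichs behavior at the pole, and justifying the exchange of angular summation with asymptotic expansion at the operator (not just kernel) level. I would argue mode by mode, using that each radial factor entering the expansion carries only the ``good'' power $r^{|n-\beta|}$ at the origin (the factor $r_>^{-|n-\beta|}$ being harmless since it is evaluated away from the pole), and control large-$n$ convergence in operator norm from the Bessel decay above.
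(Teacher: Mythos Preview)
Your proposal is correct and follows essentially the same route as the paper: separation of variables, the Bessel kernel $\tfrac{i\pi}{2}J_{\nu_l}(\lambda r_<)H^{(1)}_{\nu_l}(\lambda r_>)$, the splitting $H^{(1)}_\nu = (1+i\cot\nu\pi)J_\nu - i\csc(\nu\pi)J_{-\nu}$, grouping the resulting $\lambda^{2\nu_l}$ pieces by the residue of $\nu_l$ modulo $\mathbb Z$, and Stirling-type estimates for the uniform convergence in $l$. For the finite-rank claim your symmetrization identity is more than you need: since $\{r_<,r_>\}=\{r,r'\}$ as an unordered pair, one has $J_{\nu}(\lambda r_<)J_{\nu}(\lambda r_>)=J_{\nu}(\lambda r)J_{\nu}(\lambda r')$ outright, so the $\tilde A_\pm$ kernels are already manifestly separable in $(r,r')$ before any Taylor expansion, and only finitely many modes $l$ contribute to each $\lambda^{2j}$ coefficient because $A_{\pm,l}(\lambda)=O(\lambda^{2(\nu_l-\mu_\pm)})$ with $\nu_l-\mu_\pm\in\mathbb Z_{\ge 0}$ increasing in $|l|$.
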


\begin{proof} Define 
$$\nu_l=| l-\beta|.$$
We use results of equation (17) of \cite{my}\footnote{There is a minus sign missing in the resolvent formula (17) of \cite{my}.}.  If 
$$f(r,\theta)=\sum_{-\infty}^\infty f_l(r)e^{i l \theta},$$
then 
\begin{equation}\label{eq:sv}
(R_\beta(\lambda)f)(r,\theta)=\sum_{-\infty}^\infty (R_\beta(\lambda;l)f_l)(r)e^{i l \theta}\end{equation}
  and the problem reduces to expanding the kernels
\begin{equation}\label{eq:intk}
 R_\beta(r,\tilde r;\lambda;l) = \frac{i\pi}{2} J_{\nu_l}(\lambda r_<) H_{\nu_l}^{(1)}(\lambda r_>) \tilde r, \qquad r_<=\min(r,\tilde r), \ r_> = \max(r, \tilde r),
\end{equation}
for bounded $r$, $\tilde r$ and  $\lambda$ and obtaining uniform bounds.

For that we use the series
\[
 J_{\nu}(z) = (z/2)^\nu \sum_{k=0}^\infty \frac{(-z^2/4)^k}{k!\Gamma(\nu+k+1)} 
\]
and
\[
H^{(1)}_\nu(z) = (1+ i\cot(\nu \pi))J_\nu(z) - i \csc(\nu \pi) J_{-\nu}(z).
\]
Note that this means $\lambda^{-2\nu_l}J_{\nu_l}(\lambda r_<)J_{\nu_l}(\lambda {r}_>)$ and $J_{\nu_l}(\lambda r_<)J_{-\nu_l}(\lambda r_>)$ are entire, even functions of $\lambda$.  This observation is the origin
of the decomposition \eqref{eq:3term}.

As $\nu_l\rightarrow \infty$, by equation (1) of Section 3.13 of \cite{watson} we have $J_{\nu_l}(z) = \frac{(z/2)^{\nu_l}}{\Gamma(\nu_l  +1)}(1 + O(|z|^2/\nu_l))$, and combining this with Stirling's approximation  $\Gamma(\nu+1)\sim \sqrt{2\pi \nu}(\nu/e)^\nu$ as in Section 8.1 of \cite{watson}, shows that for $\lambda r,\; \lambda\tilde r$ varying in a compact set we have
\begin{align}\label{eq:JJ}
\lambda^{-2\nu_l}(1+ i\cot(\nu_l \pi))J_{\nu_l}(\lambda r_> )J_{\nu_l} (\lambda r_<)& =\lambda^{-2\nu_l}(1+ i\cot(\nu_l \pi))J_{\nu_l}(\lambda r )J_{\nu_l} (\lambda \tilde r)\nonumber \\
& \sim (1+ i\cot(\nu_l \pi))\frac{1}{2\pi \nu_l}\left(\frac{e^2 r \tilde r}{4 \nu^2_l}\right)^{\nu_l}.
\end{align}
We note that $\cot(\nu_l \pi)= \cot(\nu_{l'}\pi)$ if $l-\beta$ and $l'-\beta$ have the same sign. Thus, given $M$, $\lambda_0$, there is a 
$C>0$ such that for all $l$ we have
\begin{equation}\label{eq:JJ2}
| \lambda^{-2\nu_l}(1+ i\cot(\nu_l \pi))J_{\nu_l}(\lambda r_<)J_{\nu_l} (\lambda r_>)|\leq C \frac {(eM)^{2\nu_l}}{\nu_l^{2\nu_l+1}}\;\; \text{if $0\leq r,\;\tilde r \leq M$, \; $|\lambda|<|\lambda_0|$}.
\end{equation}

Similarly, as $\nu_l \rightarrow \infty$, for $\lambda r,\; \lambda\tilde r$ varying in a compact set, using $\sin(\pi \nu)\Gamma(1+\nu)\Gamma(1-\nu) = \pi \nu$ yields
\begin{align}
|\csc(\nu_l \pi)J_{\nu_l}(\lambda r_<)J_{-\nu_l}(\lambda r_>)| 
& \sim \frac 1 { \pi \nu_l}\left( \frac{r_<}{r_>}\right)^{\nu_l}.
\end{align}
Again, 
this implies that given $M$, $\lambda_0$, there is a 
$C>0$ such that for all $l$
\begin{equation}\label{eq:JJn}
\left|   \csc(\nu_l \pi)  J_{\nu_l}(\lambda r_<)J_{-\nu_l} (\lambda r_>)\right |\leq \frac{C}{\nu_l}\;\; \text{if $0\leq r,\;\tilde r \leq M$, \; $|\lambda|<|\lambda_0|$}.
\end{equation}

Let $\mu_+ = 1-\beta$ if $\beta \in (0,1/2]$ and $\mu_+ = -\beta$ if $\beta \in [-1/2,0)$. Let $\mu_- = 1-\mu_+$. Define operators $A_{0,l}$, $A_{\pm,l}$,  by
\begin{equation}\begin{split}\label{eq:As}
 A_{0,l}(\lambda)f(r,\theta)& =  \frac \pi{2}  \int_0^\infty   \csc(\nu_l \pi)   J_{\nu_l}(\lambda r_<)   J_{-\nu_l}(\lambda r_>)f_l(\tilde r)\tilde r d\tilde r e^{il\theta},\\
A_{+,l}(\lambda)f(r,\theta)& =\frac {i\pi}{2} \lambda^{-2 \mu_+} \int_0^\infty (1+i \cot(\nu_l \pi))J_{\nu_l}(\lambda r )J_{\nu_l}(\lambda \tilde r)f_l(\tilde r)\tilde r d\tilde r e^{il\theta}, \quad \text{for } l - \beta>0,\\
A_{-,l}(\lambda)f(r,\theta)& =\frac {i\pi}{2} \lambda^{-2\mu_-} \int_0^\infty (1+i \cot(\nu_l \pi))J_{\nu_l}(\lambda r )J_{\nu_l}(\lambda \tilde r)f_l(\tilde r)\tilde r d\tilde r e^{il\theta}, \quad \text{for } l - \beta < 0.
\end{split}\end{equation}

We claim that
\begin{equation}\label{e:resthree}
 R_\beta (\lambda)= \lambda^{2 \mu_+}A_+(\lambda)+\lambda^{2\mu_-} A_-(\lambda)+ A_0(\lambda),
\end{equation}
where
\begin{equation}\label{eq:Asd}
 A_{\pm}(\lambda)= \lim_{L\rightarrow \infty}\sum_{{0<\pm (l-\beta)<L}}A_{\pm,l}(\lambda), \quad \text{and } A_0(\lambda)=\lim_{L\rightarrow \infty}\sum_{-L<l<L}A_{0,l}(\lambda),
\end{equation}
with the series converging  uniformly on compact subsets in $\lambda$
as an operator $L^2_c\rightarrow L^2_{\loc}$. To check this claim, note that, for any radial $\chi \in C_c^\infty (\Real^2)$,
$$\Big\| \sum_{{0<\pm (l-\beta)<L}} \chi A_{\pm, l}\chi \Big\|= \max_{{0<\pm (l-\beta)<L}} \| \chi A_{\pm,l}\chi\|.$$
 Then the estimate \eqref{eq:JJ2} shows that for any  $M\in \Real_+$ there
is a $C'$ independent of $L$ such that, for $|\lambda|<M$, we have 
$$\Big\| \sum_{{0<\pm (l-\beta)<L}}\chi A_{\pm,l} \chi f\Big\|<C'\| f\|.$$
Moreover, using \eqref{eq:JJ2} again shows the  sequence $\{  \sum_{{0<\pm (l-\beta)<L}}\chi A_{\pm,l} (\lambda) \chi\}$ is Cauchy, uniformly in $\lambda$ with $|\lambda|<M$. Since
$M$ is arbitrary, this completes the proof of the claim for the first part of \eqref{eq:Asd}; the second part follows similarly using equation \eqref{eq:JJn}.

Since each $A_{\pm,l}$ is analytic in $\lambda^2$, $A_\pm(\lambda)$ is  a uniform (on compact sets) limit of analytic operator-valued functions $L^2_c(\mathbb R^2) \to L^2_{\loc}(\mathbb R^2)$, and hence is analytic
and even
 on $\Complex$. We similarly show that $A_0(\lambda)$  is analytic and even, using  \eqref{eq:JJn}.  
Setting $\tilde{A}_{\pm}=A_\pm$ if $\beta >0$ and $\tilde{A}_{\pm}=A_\mp$ if $\beta<0$ proves \eqref{eq:3term}.
 Inserting the power series expansions of $A_\pm(\lambda)$ and $A_0(\lambda)$ into \eqref{e:resthree} gives \eqref{e:modelresbeta}.
\end{proof}

We shall later need the following lemma, which computes the integral kernel of $R_\beta(0)$.
\begin{lem}\label{l:modatinfty}
Let $R_{00}$ be as defined by the equation \eqref{e:modelresbeta} in Lemma \ref{l:modelr}.  Then for any $f \in L^2_c(\Real^2)$,
$$(R_{00}f)(r,\theta)= \frac{1}{2} \sum_{l=-\infty}^\infty\frac{1}{\nu_l} \int_0^\infty \left(\frac{r_<}{r_>}\right)^{\nu_l}f_l(\tilde{r})\tilde{r}d\tilde{r}e^{il \theta}$$
in $\mathcal{D}_{\beta,\loc}$, which are functions that locally are in the domain $\mathcal{D}_{\beta}$.
\end{lem}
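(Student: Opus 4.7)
The plan is to identify $R_{00}$ with the value $A_0(0)$ of the operator-valued function $A_0$ introduced in the proof of Lemma \ref{l:modelr}, then evaluate $A_0(0)$ term-by-term in the angular mode decomposition by taking the $\lambda\to 0$ limit of each $A_{0,l}(\lambda)$.

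First I would observe that in the decomposition \eqref{eq:3term}, the two operators $A_0(\lambda)$ and $\tilde{A}_{\pm}(\lambda)$ are even and analytic in $\lambda$. Since $0<|\beta|\le 1/2$, both exponents $2|\beta|$ and $2(1-|\beta|)$ are strictly positive, so the terms $\lambda^{2|\beta|}\tilde A_+(\lambda)$ and $\lambda^{2(1-|\beta|)}\tilde A_-(\lambda)$ contribute nothing to the constant term of the expansion \eqref{e:modelresbeta}. Hence $R_{00}=A_0(0)$, and by \eqref{eq:Asd} I may write
\[
 R_{00}f = \lim_{L\to\infty}\sum_{|l|<L} A_{0,l}(0) f,
\]
with the limit taken in the operator-norm topology on $L^2_c\to L^2_{\loc}$ (and in fact into $\mathcal{D}_{\beta,\loc}$, by Lemma \ref{l:modelr}).

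Next I would compute each $A_{0,l}(0)f$ by passing $\lambda\to 0$ inside the integral defining $A_{0,l}$ in \eqref{eq:As}. From the series $J_\nu(z)=(z/2)^\nu/\Gamma(\nu+1)+O(z^{\nu+2})$ (and similarly for $J_{-\nu_l}$, valid since $\nu_l\notin\mathbb Z$), I get
\[
 \lim_{\lambda\to 0} J_{\nu_l}(\lambda r_<)\,J_{-\nu_l}(\lambda r_>) = \frac{(r_</r_>)^{\nu_l}}{\Gamma(\nu_l+1)\,\Gamma(1-\nu_l)}.
\]
Then I would apply the reflection identity $\Gamma(1+\nu)\Gamma(1-\nu)\sin(\pi\nu)=\pi\nu$ (already used in the proof of Lemma \ref{l:modelr}) to simplify
\[
 \lim_{\lambda\to 0}\csc(\nu_l\pi)\,J_{\nu_l}(\lambda r_<)\,J_{-\nu_l}(\lambda r_>) = \frac{1}{\pi\nu_l}\left(\frac{r_<}{r_>}\right)^{\nu_l},
\]
which, substituted into \eqref{eq:As}, yields $A_{0,l}(0)f(r,\theta) = \frac{1}{2\nu_l}\int_0^\infty (r_</r_>)^{\nu_l} f_l(\tilde r)\tilde r\,d\tilde r\,e^{il\theta}$.

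Finally I would sum over $l\in\mathbb Z$ to obtain the claimed identity. The only nontrivial point is justifying the interchange of sum and $\lambda\to 0$ limit, together with the convergence of the resulting series in $\mathcal{D}_{\beta,\loc}$; this is the main obstacle, but it is essentially already handled by the proof of Lemma \ref{l:modelr}. Indeed, the estimate \eqref{eq:JJn} gives a uniform-in-$\lambda$ bound $\|A_{0,l}(\lambda)\chi\|\lesssim 1/\nu_l$ on compact sets in $\lambda$, so the partial sums form a Cauchy sequence uniformly in $\lambda$, and analyticity of each $A_{0,l}(\lambda)$ lets me evaluate at $\lambda=0$. The result is an element of $\mathcal{D}_{\beta,\loc}$ because $A_0(\lambda)$ maps $L^2_c\to\mathcal{D}_{\beta,\loc}$ for each $\lambda$, and this property is preserved under the $\lambda\to 0$ limit in the resolvent identity.
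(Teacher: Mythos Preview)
Your proposal is correct and follows essentially the same route as the paper: identify $R_{00}$ with $A_0(0)$ via the decomposition \eqref{eq:3term}, use the uniform-in-$\lambda$ convergence of $\sum_{|l|<L}\chi A_{0,l}(\lambda)\chi$ established in Lemma \ref{l:modelr} to interchange the limit $\lambda\to 0$ with the sum, and evaluate each $A_{0,l}(0)$ using the leading Bessel asymptotics together with the reflection identity $\Gamma(1+\nu)\Gamma(1-\nu)\sin(\pi\nu)=\pi\nu$. The only cosmetic point is that the bound $\|A_{0,l}(\lambda)\chi\|\lesssim 1/\nu_l$ alone does not yield a Cauchy sequence of partial sums (since $\sum 1/\nu_l$ diverges); the Cauchy property uses the orthogonality of the angular modes, but this is exactly what is done in the proof of Lemma \ref{l:modelr} that you cite.
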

\begin{proof}
We use $A_0(\lambda)$ from \eqref{eq:Asd} and $A_{0,l}(\lambda)$ from \eqref{eq:As}.
From the proof of Lemma \ref{l:modelr} we see  $R_{00}=\lim_{\lambda \rightarrow 0} A_0(\lambda)$.  The proof of Lemma \ref{l:modelr} also shows that for $\chi \in C_c^\infty(\Real^2)$, 
the sequence $\{ \sum_{|l|<L}\chi  A_{0,l}(\lambda)\chi\}$ converges uniformly for $\lambda$ in a compact  subset of $\Complex$.  Hence 
$\chi R_{00}\chi = \sum_{l=-\infty}^\infty A_{0,l}(0)$.  
Now we note that for a fixed $l$,  $r, \tilde{r}$ in a bounded set and  $\lambda$ near $0$ from the asymptotic expansions of $J_{\nu_l}$, $J_{-\nu_l}$, 
$$J_{\nu_l}(\lambda r_<) J_{-\nu_l}(\lambda r_>)= \frac{1}{\Gamma(\nu_l+1)\Gamma(-\nu_l+1)}\left( \frac{r_<}{r_>}\right)^{\nu_l}+O(\lambda^2)= \frac{\sin(\pi \nu_l)}{\pi \nu_l}\left( \frac{r_<}{r_>}\right)^{\nu_l}+O(\lambda^2), $$
so that 
$$(A_{0,l}(0)f)(r,\theta)= \frac{1}{2\nu_l} \int_0^\infty \left(\frac{r_<}{r_>}\right)^{\nu_l}f_l(\tilde{r})\tilde{r}d\tilde{r}e^{il \theta}$$
for $r$ in any fixed compact set, proving the lemma.
\end{proof}

Notice that \eqref{eq:3term} shows that if $\beta =p/q\in \mathbb{Q}$ with $p,q$ being coprime integers,
then $R_\beta(\lambda)$ continues meromorphically to $\Lambda_q$,  the minimal Riemann surface on which 
$\lambda$ and $\lambda^{2/q}$ are
analytic.   In particular, if $\beta=1/2$, then $\Lambda_q = \mathbb C$.

\subsection{Vodev's identity and meromorphic continuation}
\label{s:vod}
By equation \eqref{eq:U-equiv}, away from $\Gamma$ the conjugated operator $\tilde P$ agrees with the model operator $P_{\beta}$. Consequently, arguing as in Section 2.5 of \cite{cd2} yields the resolvent identity
\begin{gather}
\nonumber \tR(\lambda)-\tR(z)= (\lambda^2-z^2)\tR(\lambda) \chi_1(2-\chi_1)\tR(z) + \{ 1-\chi_1+\tR(\lambda)[P_{\beta},\chi_1]\}(R_{\beta}(\lambda)-R_{\beta}(z))K_1, \\
 K_1 = 1 - \chi_1 -[P_{\beta},\chi_1] \tR(z), \label{e:vodevido}
\end{gather}
for any $\chi_1 \in C_c^\infty(\mathbb R^2)$ which is $1$ near $\Gamma$, and for any $\lambda$ and $z$ in the upper half plane. Bringing the $\tR(\lambda)$ terms to the left, the remaining terms to the right, and factoring, yields
\begin{equation}\label{e:vodevid}
 \tR(\lambda)  (I - K(\lambda)) = F(\lambda),
\end{equation}
where
\begin{align}\label{eq:F}
 K(\lambda) &= (\lambda^2 - z^2)\chi_1(2-\chi_1)\tR (z)  + [P_{\beta}, \chi_1](R_{\beta}(\lambda) - R_{\beta}(z))K_1, \nonumber\\
 F(\lambda) &= \tR(z) + (1-\chi_1)(R_{\beta}(\lambda) - R_{\beta}(z))K_1 .
\end{align}
Here and below we shorten formulas by using notation which displays $\lambda$-dependence but not $z$-dependence for operators other than resolvents.  The identities \eqref{e:vodevido} and \eqref{e:vodevid} are versions of Vodev's resolvent identity from \cite{vodev}.

For any $\chi \in C_0^\infty(\mathbb R^2),$ the resolvent $\tR(\lambda)$ continues meromorphically to $\Lambda$, the logarithmic cover of $\Complex \setminus \{0\}$. This has been proved for the resolvent $R(\lambda)$ in \cite{my} and the meromorphic continuation of $\tR(\lambda)$ follows from the unitary conjugation. Alternatively, it can be deduced from \eqref{e:vodevid} using the Analytic Fredholm Theorem as at the end of Section 2 of \cite{cd2}. Thus \eqref{e:vodevido} and \eqref{e:vodevid} continue to hold for any $z$ and $\lambda$ in $\Lambda$, with $K(\lambda)$ and $K_1$ mapping $L^2_c(\Omega)$ to $L^2_c(\Omega)$, and $\tR(\lambda)$ and $F(\lambda)$ mapping $L^2_c(\Omega)\rightarrow \Tilde{\mathcal{D}}_{\loc}$. 

To show Theorem \ref{t:riem}, we note that using \eqref{e:vodevid} we can see that the minimal  Riemann surface to which $\tR(\lambda)$ (and hence $R(\lambda)$) continues is the same as the Riemann surface to which $R_\beta(\lambda)$ 
continues.  Using equations \eqref{eq:sv} and \eqref{eq:intk}, for nonintegral  $\beta$ this is the Riemann surface to which the set $\{ (J_{\nu_l}(\lambda))^2,\; l\in \Integers\}$ continues.  When $\beta$ is rational but
$\beta \not \in \Integers$, this Riemann surface is a finite cover of the complex plane. In particular, if $2\beta\in\mathbb Z$, then the continuation is to the complex plane
(the double cover of the upper half plane).  This is the same Riemann surface to which, for example,  for $V\in L^\infty_c(\Real^d)$ and $\Delta=\sum_{j=1}^d \partial_{x_j}^2$,
$(-\Delta+V-\lambda^2)^{-1}$ continues in {\em odd} dimension $d$.  Thus this special case of Aharonov-Bohm scattering shares some features with odd-dimensional Euclidean scattering.


\subsection{Series expansion of the resolvent}

In this section we show that near $0$, $\tR(\lambda)$ has an expansion in powers of $\lambda^2$, $\lambda^{2|\beta|}$, and $\lambda^{2(1-|\beta|)}$.  
In fact, we shall have several operators and functions with this sort of expansion. If near $\lambda=0$, $H(\lambda)$ can be written
\begin{equation}\label{eq:pexp}
H(\lambda)=\sum_{j,k\geq 0} T_{jk}(H)\lambda^{2(j+k|\beta|)}+ \sum_{k>0, j\geq 0} T_{jk}'(H)\lambda^{2(j+k(1-|\beta|))}
\end{equation}
for some $T_{jk}(H)$, $T_{jk}'(H)$, then we shall say $H$ has an expansion in powers of $\lambda^2$, $\lambda^{2|\beta|}$, and $\lambda^{2(1-|\beta|)}$.  We shall sometimes just say $H$
has an expansion of the form \eqref{eq:pexp}.  We shall mean by this that there is an $\epsilon>0$ so
that each series converges uniformly for $|\lambda|<\epsilon$.  If $H$ is operator-valued $L^2_c\rightarrow \mathcal{D}_{\loc}$, we mean that for each $\chi\in C_c^\infty$ there is a neighborhood of $0$ (which may depend on $\chi$) such that the series converge uniformly after multiplication on both the left and the right by $\chi$.

Note that if $h_1(\lambda)$, $h_2(\lambda)$ are functions which have expansions of the form \eqref{eq:pexp}, so is $h_1h_2$.  The same is true of operator-valued $H_1(\lambda)$, $H_2(\lambda)$ 
as long as they satisfy appropriate mapping properties. The main result of this section is the following

\begin{thm}\label{thm:rnonint} There are operators $B_{jk}$, $B'_{jk}:L^2_c(\Real^2)\rightarrow \tilde{\mathcal{D}}_{\loc}$ such that near $0$
$$\tR(\lambda)= \sum_{j,k\geq 0} B_{jk}\lambda^{2(j+k|\beta|)}+ \sum_{k>0, j\geq 0} B_{jk}'\lambda^{2(j+k(1-|\beta|))}.$$
Moreover, if $k>0$ then $B_{jk}$, $B_{jk}'$ have finite rank.
\end{thm}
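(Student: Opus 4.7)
The plan is to derive the expansion of $\tR(\lambda)$ by inverting Vodev's identity \eqref{e:vodevid}, $\tR(\lambda)(I - K(\lambda)) = F(\lambda)$, while propagating the expansion of the model resolvent $R_\beta(\lambda)$ established in Lemma \ref{l:modelr}. The operators $K(\lambda)$ and $F(\lambda)$ defined in \eqref{eq:F} depend on $\lambda$ only through the polynomial factor $\lambda^2 - z^2$ and through $R_\beta(\lambda)$, so composition with the compactly supported cutoffs appearing in their definitions yields expansions of the form \eqref{eq:pexp} near $\lambda = 0$, with finite-rank coefficients on every term of positive fractional order by the last assertion of Lemma \ref{l:modelr}. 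The class of operator-valued functions admitting such expansions is stable under sums, products, and (when applicable) Neumann-series inversion, so establishing that the inverse of $I - K(\lambda)$ has an expansion of this form is the key algebraic step.

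The central analytic task is to invert $I - K(\lambda)$ in a neighborhood of $\lambda = 0$ while preserving this expansion structure. Since $R_\beta(z)$ is smoothing and $K(\lambda)$ is built from $R_\beta(\lambda)$ composed with compactly supported cutoffs, $K(\lambda) \colon L^2_c(\Omega) \to L^2_c(\Omega)$ is compact, making $I - K(\lambda)$ Fredholm of index zero. Once injectivity of $I - K(0)$ is established, the Neumann series
\begin{equation*}
(I - K(\lambda))^{-1} = \sum_{n=0}^\infty \bigl((I - K(0))^{-1}(K(\lambda) - K(0))\bigr)^n (I - K(0))^{-1}
\end{equation*}
converges for $|\lambda|$ sufficiently small, because $K(\lambda) - K(0)$ is a sum of terms each carrying a strictly positive power of $\lambda^2$, $\lambda^{2|\beta|}$, or $\lambda^{2(1-|\beta|)}$, so $(I - K(\lambda))^{-1}$ inherits an expansion of the form \eqref{eq:pexp}.

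The hard step is the injectivity of $I - K(0)$, which via Vodev's identity amounts to showing that $\tilde P$ has no zero mode in $\tilde{\mcd}_{\loc}$. Outside a compact set $\tilde P u = 0$ reduces to $P_\beta u = 0$, and separating variables in polar coordinates, as in the proof of Lemma \ref{l:modelr}, forces $u$ to be a combination of modes $r^{\pm \nu_l} e^{i l \theta}$ with $\nu_l = |l + \beta| > 0$ (using $\beta \notin \Integers$), of which only the decaying modes $r^{-\nu_l} e^{i l \theta}$ are admissible for a bounded solution. I would then adapt the argument of Lemma \ref{l:noresat0}, using the $P_\beta$ analog of the quadratic form identity in Lemma \ref{l:quadform} and relying on the matching conditions \eqref{eq:matching} to cancel the boundary contributions along $\Gamma$, to obtain
\begin{equation*}
\int_{\Omega \cap D_\rho} |(-i \vec \nabla - \beta \vec A_0) u|^2 \, dx = \int_{\partial D_\rho} \bar u \, \partial_r u \, dS = O(\rho^{-2|\beta|}) \to 0 \qquad \text{as } \rho \to \infty,
\end{equation*}
so that $(-i\vec \nabla - \beta \vec A_0) u \equiv 0$; combined with the vanishing of $u$ at $s_1$ dictated by the Friedrichs domain condition \eqref{eq:dom} (and the fact that a nonzero covariantly constant function on $\Omega$ would have nontrivial monodromy around $s_1$ when $\beta \notin \Integers$), this gives $u \equiv 0$.

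With invertibility in hand, $\tR(\lambda) = F(\lambda)(I - K(\lambda))^{-1}$ provides the expansion of $\tR(\lambda)$ claimed in Theorem \ref{thm:rnonint}. The finite-rank property for coefficients with $k > 0$ follows because any term carrying a positive power of $\lambda^{2|\beta|}$ or $\lambda^{2(1-|\beta|)}$ must be, by the multiplicative structure of the Neumann series and the cutoff structure of $K$ and $F$, a composition of bounded operators with at least one finite-rank coefficient $R_{2j, \pm}$ from Lemma \ref{l:modelr}; finite rank is preserved under composition with bounded operators.
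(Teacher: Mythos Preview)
Your outline captures the right ingredients --- Vodev's identity, the expansion of $R_\beta$ from Lemma \ref{l:modelr}, and the absence of bounded zero modes proved via the magnetic quadratic-form identity --- but there is a genuine gap in the inversion step. You assert that ``injectivity of $I-K(0)$, which via Vodev's identity amounts to showing that $\tilde P$ has no zero mode in $\tilde{\mcd}_{\loc}$.'' That equivalence is not established, and it is not clear how to establish it directly: $K(0)$ depends on the auxiliary spectral parameter $z$ (still in the upper half-plane), and the identity $\tR(\lambda)(I-K(\lambda))=F(\lambda)$ cannot be evaluated at $\lambda=0$ to extract a zero mode from an element of $\ker(I-K(0))$, precisely because you do not yet know $\tR(0)$ exists. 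In other words, the step that connects a Fredholm kernel element for $I-K(0)$ to a function $u$ with $\tilde P u=0$ presupposes exactly the boundedness of $\tR$ near $0$ that you are trying to prove.

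The paper closes this gap by a detour. It first splits $K(\lambda)\chi_2=F_1^\sharp+K^\sharp(\lambda)$ with $F_1^\sharp$ finite rank and $\|K^\sharp(0)\|\le 1/2$ (Lemma~\ref{l:rs1}), so a Neumann series inverts the $K^\sharp$ part and leaves only a finite-rank obstruction $I-F_1^\sharp D(\lambda)$, which is inverted by Cramer's rule at the cost of a possible singularity $\lambda^{-\mu_0}$ (Lemma~\ref{l:bexp}). Only then does the zero-mode argument enter: Lemma~\ref{l:singterm} shows that any singular leading coefficient would produce a decaying solution of $\tilde P U=0$, which Lemma~\ref{l:nores} (your quadratic-form argument) excludes. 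With $\tR$ now known to be bounded near $0$, the second half of Lemma~\ref{l:rs1} shows one may take $z$ small enough that $\|K(\lambda)\chi_2\|\le 1/2$ near $0$, so $F_1^\sharp=0$ and your Neumann series is finally justified. Your zero-mode argument is correct and is exactly what the paper uses; what is missing is this finite-rank/Cramer's-rule step that lets the zero-mode result feed back into invertibility of $I-K$.
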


\begin{rem}
    That is, $\tR(\lambda)$ has an expansion of the form \eqref{eq:pexp}. Since $\tR(\lambda)$ is unitarily equivalent to $R(\lambda)$, Theorem \ref{thm:rnonint} implies the expansion of $R(\lambda)$ in Theorem \ref{t:resnonint}.  In comparing the two, recall that in
    this section we have assumed that $-1/2\leq \beta \leq 1/2,$ and that if $\beta-\beta' \in \Integers,$ then 
    $R_\beta$ and $R_{\beta'}$ are unitarily equivalent.
\end{rem}  

Theorem \ref{thm:rnonint} follows from combining Lemmas \ref{l:rs1} and \ref{l:rbded} and the observation that Lemma \ref{l:modelr}
yields that the operator $F$ defined in \eqref{eq:F} has an expansion of the 
form \eqref{eq:pexp}, with $T_{jk}(F)$, $T_{jk}'(F)$ having finite rank if $k>0$. We devote the rest of this 
paper to proving Theorem \ref{thm:rnonint}. {The following lemma is used to establish an expansion for $\tR(\lambda)$.}

\begin{lem} \label{l:rs1}
There is a finite rank operator $F_1^\sharp$ independent of $\lambda$ and an operator $D(\lambda):L^2_c(\Real^2)\rightarrow L^2_c(\Real^2)$ such that $D(\lambda)$ has an expansion as in 
\eqref{eq:pexp} and 
$$\tR(\lambda)(I-F_1^\sharp D(\lambda))=F(\lambda)D(\lambda)$$
where $F(\lambda)$ is defined by \eqref{eq:F}.
With the notation of \eqref{eq:pexp}, $T_{jk}(D)$ and $T_{jk}'(D)$  are of finite rank if $k>0$.  Moreover, if for each $\chi \in C_c^\infty(\mathbb R^2) $ and $\tilde \chi \in C_c^\infty(\Omega)$, $\|\tilde \chi \tR(\lambda)\chi\|_{L^2\rightarrow H^1}$ is bounded in some neighborhood of the origin, then 
$F_1^\sharp$ can be chosen to be $0$.
\end{lem}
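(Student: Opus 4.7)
The plan is to view Vodev's identity \eqref{e:vodevid}, $\tR(\lambda)(I-K(\lambda))=F(\lambda)$, as an equation to be inverted on the right. If I can construct $D(\lambda)$ and a finite-rank $F_1^\sharp$ such that $(I-K(\lambda)+F_1^\sharp)D(\lambda)=I$, then multiplying \eqref{e:vodevid} on the right by $D(\lambda)$ yields $\tR(\lambda)(I-F_1^\sharp D(\lambda))=\tR(\lambda)(I-K(\lambda))D(\lambda)=F(\lambda)D(\lambda)$, which is exactly the identity the lemma asserts. So the task reduces to inverting $I-K(\lambda)+F_1^\sharp$ near $\lambda=0$ for a suitable $F_1^\sharp$ and expanding the inverse in the form \eqref{eq:pexp}.

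First I would observe that combining the definitions in \eqref{eq:F} with Lemma \ref{l:modelr} applied to $R_\beta(\lambda)-R_\beta(z)$ shows that both $K(\lambda)$ and $F(\lambda)$ admit expansions of the form \eqref{eq:pexp}, and that the coefficients $T_{jk}, T_{jk}'$ with $k>0$ inherit the finite-rank property of the $R_{2j,\pm}$. Next, because of the cutoff $\chi_1$ and the compactly supported commutator $[P_\beta,\chi_1]$, the operator $K(\lambda)$ factors through compactly supported $L^2$ functions on both sides. Hence, viewed on $\chi L^2(\Real^2)$ for a sufficiently large $\chi\in C_c^\infty$, the operator $K(0)$ is compact (the local compactness of $\tR(z)$ and $R_\beta(z)$ for $\im z>0$ via Rellich together with the compact support of $\chi_1$ and $[P_\beta,\chi_1]$). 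Thus $I-K(0)$ is Fredholm of index zero, and I choose a finite-rank $F_1^\sharp$, for example a projection onto $\ker(I-K(0))$ along a fixed complement of its range, so that $L:=I-K(0)+F_1^\sharp$ is invertible.

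With $L$ invertible and $M(\lambda):=K(0)-K(\lambda)$ vanishing at $\lambda=0$, the Neumann series $D(\lambda)=(I+L^{-1}M(\lambda))^{-1}L^{-1}=\sum_{n\ge 0}(-L^{-1}M(\lambda))^n L^{-1}$ converges on some neighborhood of $0$. Since $M(\lambda)$ has an expansion of the form \eqref{eq:pexp} with no constant term, and its $T_{jk}(M), T_{jk}'(M)$ with $k>0$ are finite rank, expanding the powers $M(\lambda)^n$ and rearranging the double series by powers of $\lambda^{2(j+k|\beta|)}$ and $\lambda^{2(j+k(1-|\beta|))}$ produces an expansion of $D(\lambda)$ of the form \eqref{eq:pexp}. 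Any contribution to a term with $k>0$ must contain at least one fractional-power factor from an $M(\lambda)$, and such factors are finite rank, so $T_{jk}(D), T_{jk}'(D)$ with $k>0$ are finite rank as claimed.

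For the last assertion, I would argue that the boundedness hypothesis on $\|\tilde\chi\tR(\lambda)\chi\|_{L^2\to H^1}$ forces $I-K(0)$ itself to be invertible, so that $F_1^\sharp=0$ suffices. The idea is that any nonzero $u\in\ker(I-K(0))$ would, via the analytic Fredholm machinery used in Section 2 of \cite{cd2} and the identity \eqref{e:vodevid}, correspond to a pole or unbounded growth of the localized resolvent $\tilde\chi\tR(\lambda)\chi$ as $\lambda\to 0$, contradicting the hypothesis. I expect this last step to be the main obstacle: one has to translate the purely algebraic fact $u\in\ker(I-K(0))$ into actual Laurent growth of the appropriately localized resolvent, and in doing so verify that the Banach space on which $K(0)$ is compact is compatible with the $L^2\to H^1$ topology in which the hypothesis is phrased.
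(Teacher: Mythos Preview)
Your overall strategy for the first part is the same as the paper's: both arguments aim to produce a right inverse $D(\lambda)=(I-K(\lambda)+F_1^\sharp)^{-1}$ of $I-K(\lambda)$ modulo a finite-rank correction, and then multiply \eqref{e:vodevid} on the right by $D(\lambda)$. The paper carries this out slightly more concretely by introducing a second cutoff $\chi_2\in C_c^\infty$ with $\chi_2\equiv 1$ near $\operatorname{supp}\chi_1$, splitting $K(\lambda)=K(\lambda)\chi_2+K(\lambda)(1-\chi_2)$, and writing $K(\lambda)\chi_2=F_1^\sharp+K^\sharp(\lambda)$ with $\|K^\sharp(0)\|\le 1/2$. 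Because the range of $K(\lambda)$ lies in $\operatorname{supp}\chi_1$, one has $(1-\chi_2)K(\lambda)=0$, so $K(\lambda)(1-\chi_2)$ is nilpotent and the Neumann series $\sum_m (K^\sharp(\lambda))^m(I+K(\lambda)(1-\chi_2))$ gives $D(\lambda)$ explicitly on all of $L^2_c$. Your more abstract Fredholm argument on a fixed space $\chi L^2$ is fine, but note that this only yields $D(\lambda)$ on that one space, whereas the lemma asks for $D(\lambda)\colon L^2_c\to L^2_c$. You would still need the observation that for $f\in L^2_c$ one can set $D(\lambda)f=f+D(\lambda)|_{\chi L^2}(K(\lambda)-F_1^\sharp)f$, which works precisely because the range of $K(\lambda)$ and of $F_1^\sharp$ sits in a fixed compact set; this is exactly the content of the paper's $\chi_2$ trick.

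The genuine divergence is in the final assertion. You propose to show that a nontrivial $u\in\ker(I-K(0))$ forces $\tilde\chi\tR(\lambda)\chi$ to blow up as $\lambda\to 0$. That implication is not clear: from $\tR(\lambda)=F(\lambda)(I-K(\lambda))^{-1}$ one cannot conclude that a singularity of $(I-K(\lambda))^{-1}$ survives after composing with $F(\lambda)$, and you correctly flag this as the main obstacle. The paper sidesteps the issue entirely by exploiting the \emph{freedom in the auxiliary point} $z$ in Vodev's identity. Under the boundedness hypothesis on $\|\tilde\chi\tR(\lambda)\chi\|_{L^2\to H^1}$ (the $H^1$ norm is what controls the first-order commutator $[P_\beta,\chi_1]\tR(z)$ appearing in $K_1$), one gets $\|K_1\chi_1\|=O(1)$ uniformly as $z\to 0$, and combining this with $\|\chi_2(R_\beta(\lambda)-R_\beta(0))\chi_2\|=O(\lambda^{2|\beta|})$ from Lemma~\ref{l:modelr} yields $\|K(\lambda)\chi_2\|=O(\lambda^{2|\beta|})+O(z^{2|\beta|})$. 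Choosing $z=i|z|$ small and then $\lambda$ small makes $\|K(\lambda)\chi_2\|\le 1/2$, so the Neumann series converges with $F_1^\sharp=0$ from the start. This avoids any need to analyze $\ker(I-K(0))$ directly, and is both shorter and more robust than the route you outline.
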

\begin{proof}
We use Vodev's identity, \eqref{e:vodevid}.
Choose $\chi_2\in C_c^\infty(\Real^2)$ such that $\chi_2=1$ in a neighborhood of the support of $\chi_1$.  Using that $K(\lambda)\chi_2$ is 
a compact operator  we write
\begin{equation}\label{eq:Kdecomp}
K(\lambda)\chi_2=F_1^\sharp +K^\sharp(\lambda),\end{equation}
where $F_1^\sharp$ is finite rank and is independent of $\lambda$,  and $\|K^\sharp(0)\|_{L^2\rightarrow L^2}\leq 1/2$.   Moreover, using \eqref{eq:F} and Lemma \ref{l:modelr} we see that 
$$K^\sharp(\lambda)=K_2(\lambda)+ \lambda^{2|\beta|}K_3(\lambda)+\lambda^{2(1-|\beta|)}K_4(\lambda)$$
where $K_2,\;K_3,\;K_4$ are compact operators depending analytically on $\lambda^2$ near $0$.   Moreover, $K_3$, $K_4$ and their derivatives of all orders are of finite rank at $\lambda =0$.
In addition,  the ranges of the operators $K_2,\;K_3,\; K_4$ are contained in 
the functions with support in the support of $\chi_1$.  Then since $\|K^\sharp(0)\|\leq 1/2$, and using the support properties of $\chi_2$, there is a neighborhood of $0$ in $\Lambda$ such that
\begin{equation}\label{eq:Dexp}
D(\lambda):= (I-K^\sharp(\lambda) -K(\lambda)(1-\chi_2))^{-1}
=\sum_{m=0}^\infty  (K^\sharp(\lambda))^m(I+K(\lambda)(1-\chi_2))\end{equation}
is a bounded operator.  Moreover, because $|\beta|+(1-|\beta|)=1$, it has a series expansion of the form 
$$D(\lambda)= \sum_{j,k\geq 0} T_{jk}(D)\lambda^{2(j+k|\beta|)}+\sum _{j\geq 0,k>0} T'_{jk}(D) \lambda^{2(j+k(1-|\beta|))}$$
for $\lambda$ sufficiently near $0$.  The operators $T_{jk}(D)$ and $T_{jk}'(D)$ map $L^2_c(\Real^2)$ to $\mathcal{D}_c $, and for $k > 0$ they have finite rank.

To complete the proof, we show that if $\|\tilde \chi \tR(\lambda)\chi_2\|_{L^2\rightarrow H^1}$ is bounded in some neighborhood of $0$, then we can take $F^\sharp=0$. 
 Under these assumptions,
 $\|K_1 \chi_1\|= O(1)$ as $z\rightarrow 0$ with $z$ in the upper half plane.  
 Then, using $\|\chi_2(R_{\beta}(\lambda)-R_{\beta}(0))\chi_2\|=O(\lambda^{2|\beta|})$, we have
 \begin{align*}
 K(\lambda)\chi_2& = (\lambda^2 - z^2)\chi_1(2-\chi_1)\tR (z) \chi_2 + [P_{\beta}, \chi_1](R_{\beta}(\lambda) - R_{\beta}(z))K_1\chi_2\\
 & = O(\lambda^{2|\beta|}) + O(z^{2|\beta|}).
\end{align*}
Thus we can choose $z=i|z|$ sufficiently small in norm and then $\lambda_0$ sufficiently small so that 
$\| K(\lambda)\chi_2\|\leq 1/2$ when $|\lambda|<\lambda_0$.  This allows us to choose $F_1^\sharp=0$ in \eqref{eq:Kdecomp}, since $\| K(0)\chi_2\|<1/2$.
\end{proof}

To show Theorem \ref{thm:rnonint} using the previous lemma, we only need the following
\begin{lem}\label{l:rbded}
For any $\chi \in C_c^\infty(\Real^2)$, both $\| \chi R(\lambda)\chi\|_{L^2\rightarrow \mathcal{D}}$ and $\| \chi \tR(\lambda)\chi\|_{L^2\rightarrow \tilde{\mathcal{D}}}$ are uniformly bounded in a neighborhood of $\lambda = 0$.
\end{lem}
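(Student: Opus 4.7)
The plan is to invoke Vodev's identity \eqref{e:vodevid}, $\tilde R(\lambda)(I - K(\lambda)) = F(\lambda)$, and reduce boundedness of $\tilde R(\lambda)$ near $\lambda = 0$ to invertibility of $I - K(0)$ on $L^2_c(\Omega)$. The bound for $R(\lambda) = e^{if}\tilde R(\lambda) e^{-if}$ follows from the unitary equivalence of the domains $\mathcal D \leftrightarrow \tilde{\mathcal D}$, and the upgrade from an $L^2 \to L^2$ bound to an $L^2 \to \tilde{\mathcal D}$ bound is routine, via $\tilde P(\chi \tilde R(\lambda)\chi) = \chi^2 + \lambda^2 \chi \tilde R(\lambda)\chi + [\tilde P, \chi]\tilde R(\lambda)\chi$ together with nested cutoffs and the $H^1_{\loc}$-control of Lemma \ref{l:quadform} (which applies in the non-integer case by the same proof, relying only on the matching condition \eqref{eq:matching}).

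By Lemma \ref{l:modelr}, $R_\beta(\lambda) = R_\beta(0) + O(\lambda^{2|\beta|}) + O(\lambda^{2(1-|\beta|)})$ as $\lambda \to 0$, with both error terms vanishing since $0 < |\beta| \le 1/2$. Therefore $K(\lambda)$ and $F(\lambda)$ from \eqref{eq:F} extend continuously across $\lambda = 0$, and $K(\lambda)$ is compact on $L^2_c(\Omega)$ (compactly supported range combined with the $H^1$-embedding from Lemma \ref{l:quadform}). By the analytic Fredholm theorem, $(I - K(\lambda))^{-1}$ is meromorphic on $\Lambda$ and extends boundedly across $\lambda = 0$ as soon as $\ker(I - K(0)) = \{0\}$; at which point $\tilde R(\lambda) = F(\lambda)(I - K(\lambda))^{-1}$ is bounded near $0$.

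For the triviality of $\ker(I - K(0))$, I would suppose $u \in L^2_c(\Omega)$ satisfies $u = K(0)u$ and patch the two relevant resolvents via $\chi_1$, considering the candidate
$$v := \chi_1 \tilde R(z) u + (1 - \chi_1) R_\beta(0) K_1 u,$$
which lies in $\tilde{\mathcal D}_{\loc}$ and is bounded on $\Real^2$ (Lemma \ref{l:modatinfty} gives the boundedness of the kernel of $R_\beta(0)$ at infinity, while $\chi_1 \tilde R(z) u$ is compactly supported). A direct commutator computation, using $(\tilde P - z^2) \tilde R(z) = I$, $P_\beta R_\beta(0) = I$ on $L^2_c$, the identity $\tilde P = P_\beta$ off $\Gamma$ (where $\chi_1 \equiv 1$), $[\tilde P, \chi_1] = [P_\beta, \chi_1]$, and the hypothesis $u = K(0)u$ together with the definition of $K_1$, should yield $\tilde P v = 0$, possibly after a slight modification of $v$ to cancel residual terms.

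The proof then concludes via the non-integer analog of Lemma \ref{l:noresat0}. Viewing $u_0 := e^{if} v \in \mathcal D_{\loc}$ with $P u_0 = 0$ and $u_0$ bounded, a Fourier--Bessel decomposition outside a large ball containing $S$ (where $\vec A$ is gauge-equivalent to $\beta \vec A_0$, so the relevant Bessel indices are $\nu_l = |l+\beta| > 0$) shows $u_0 \to 0$ at infinity, and the Lemma \ref{l:quadform}-style energy identity on an expanding disk with small disks removed around the poles gives $(-i\nabla - \vec A) u_0 = 0$. Hence $|u_0|^2$ is locally constant on the connected set $\Real^2 \setminus S$ and vanishes at each pole by the Friedrichs condition, so $u_0 \equiv 0$. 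Thus $v = 0$, which forces $\tilde R(z) u = 0$ in a neighborhood of $\Gamma$; unique continuation gives $\tilde R(z) u = 0$ globally, so $u = (\tilde P - z^2) \tilde R(z) u = 0$. The main technical obstacle I anticipate is the precise commutator verification that $\tilde P v = 0$: the naive patch produces residual boundary-type terms, and the exact interplay between $u = K(0)u$, the definition of $K_1$, and the commutators $[P_\beta, \chi_1]$ must be tracked carefully to achieve cancellation.
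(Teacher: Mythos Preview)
Your approach differs from the paper's. The paper does not attempt to show $\ker(I-K(0))=\{0\}$ directly. Instead it first establishes (Lemma~\ref{l:bexp}) a \emph{possibly singular} expansion $\tilde R(\lambda)=\lambda^{-\mu_0}(\cdots)$ by inverting the finite-rank perturbation $I-F_1^\sharp D(\lambda)$ of Lemma~\ref{l:rs1} via Cramer's rule, then defines $\mu_1\le\mu_0$ as the true order of the singularity and shows (Lemma~\ref{l:singterm}) that if $\mu_1>0$ then any $U$ in the range of $S_{-\mu_1}:=\lim_{\lambda\to 0}\lambda^{\mu_1}\tilde R(\lambda)$ satisfies $\tilde PU=0$ and $U(x)=o(1)$ at infinity. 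The first of these drops out of the $\lambda^{-\mu_1}$ coefficient in $(\tilde P-\lambda^2)\tilde R(\lambda)=I$; the decay comes from expanding the original Vodev identity~\eqref{e:vodevido} in $z$ near $0$ (with $\lambda$ fixed in the upper half plane) and invoking Lemma~\ref{l:modatinfty}. Lemma~\ref{l:nores} (your ``non-integer analog of Lemma~\ref{l:noresat0}'') then forces $U=0$, contradicting $\mu_1>0$.

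Your route has a genuine gap at the step ``$v=0\Rightarrow u=0$''. The unique continuation you invoke does not apply: $w:=\tilde R(z)u$ satisfies $(\tilde P - z^2)w = u$, which is \emph{inhomogeneous} on $\supp u$. From $v=0$ one obtains only $w=0$ on $\{\chi_1=1\}$, hence $u=0$ on the interior of $\{\chi_1=1\}$; but $u$ can still be supported in the transition region $\supp\chi_1\setminus\{\chi_1=1\}$, and nothing propagates $w=0$ across it (the complement of $\supp u$ is disconnected into an inner and an outer piece). The natural candidate $v=F(0)u$ does satisfy $\tilde P v=0$, since $(\tilde P-\lambda^2)F(\lambda)=I-K(\lambda)$ gives $\tilde P\,F(0)u=(I-K(0))u=0$, so your plan runs cleanly up through the application of Lemma~\ref{l:nores}; what is then missing is precisely the injectivity of $F(0)$ on $\ker(I-K(0))$, and this is not a routine consequence of the ingredients you list. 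The paper's singular-term extraction sidesteps the issue entirely: it never needs to recover a kernel element from the zero mode, because the zero mode \emph{is} the obstruction to boundedness.
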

In fact, in order to prove Theorem \ref{thm:rnonint} using Lemma \ref{l:rs1}, it is enough to show that  $\| \tilde \chi R(\lambda)\chi\|_{L^2\rightarrow H^1}$ is bounded when $\tilde \chi \in C_c^\infty(\Omega)$. Such a resolvent bound follows directly from Lemma \ref{l:rbded}, as $\mathcal{D}\subset H^1$ by the definition of Friedrichs extension, or even from basic local elliptic regularity since $\tilde\chi$ is supported away from $\Gamma$. Also see \cite[Corollary 2.5]{Le}. We postpone the proof of Lemma \ref{l:rbded} to the next subsection.  Its proof uses the first part of Lemma \ref{l:rs1}.

We also need the following result on the null space of the conjugated operator $\tilde P$.
\begin{lem}\label{l:nores}
If $\tilde P u=0$, $u\in \tilde{D}_{\loc}$ and $u$ is bounded, then $u$ is identically zero. 
\end{lem}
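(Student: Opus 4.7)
The plan is to adapt the energy argument of Lemma \ref{l:noresat0} to the magnetic setting. Since $\tilde P = P_\beta$ on $\Omega$, it suffices to show that every bounded $u \in \tilde{\mathcal D}_{\loc}$ with $P_\beta u = 0$ vanishes identically.

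First I pin down the far-field behavior. Fix $R_0$ with $\Gamma \subset D_{R_0}$; on the annulus $\{|x|>R_0\}$ the function $u$ is smooth and single-valued and satisfies $P_\beta u=0$. Expanding in the usual Fourier basis $u(r,\theta) = \sum_{l\in\Integers} u_l(r)e^{il\theta}$, each $u_l$ satisfies the Euler-type ODE
\[
-u_l'' - r^{-1} u_l' + \nu_l^2 r^{-2} u_l = 0, \qquad \nu_l = |l+\beta|,
\]
whose independent solutions are $r^{\pm \nu_l}$. Because $\beta \notin \Integers$ every $\nu_l > 0$, so boundedness of $u$ kills the growing modes and forces $u_l(r) = c_l r^{-\nu_l}$ on $\{r > R_0\}$.

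Next I apply the magnetic Green's identity, the natural analog of \eqref{e:quadform} for $P_\beta$. Setting $D_\beta = -i\nabla - \beta\vec A_0$, integration by parts on $\Omega \cap D_\rho$ in $\int \bar u P_\beta u = 0$ gives
\[
\int_{\Omega\cap D_\rho} |D_\beta u|^2\,dA = i\int_{\partial(\Omega\cap D_\rho)} \bar u\,(D_\beta u \cdot \nu)\,dS.
\]
The two-sided contribution from $\Gamma$ cancels exactly as in the proof of Lemma \ref{l:quadform}: the matching condition \eqref{eq:matching} yields $|u_+|^2 = |u_-|^2$, which kills the $\vec A_0 \cdot \nu$ piece of $D_\beta \cdot \nu$, while the phase-and-sign matching between $u_\pm$ and $\partial_\nu u_\pm$ kills the $-i\partial_\nu$ piece. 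On $\partial D_\rho$ the outward normal is radial and $\vec A_0 \cdot \hat r = 0$ (since $\vec A_0$ is tangent to circles about the origin), so the surface integral collapses to $\int_{\partial D_\rho} \bar u\,\partial_r u\, dS$; substituting the far-field expansion and using orthogonality of $e^{il\theta}$ shows this equals $-2\pi \sum_l \nu_l |c_l|^2 \rho^{-2\nu_l}$, which is nonpositive.

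Since the left-hand side is nonnegative and the right-hand side is nonpositive, both must vanish for every $\rho > R_0$. This simultaneously forces $c_l = 0$ for all $l$ (hence $u \equiv 0$ on $\{r > R_0\}$) and $D_\beta u \equiv 0$ on $\Omega$. A direct computation using that $\beta$ and $\vec A_0$ are real then gives
\[
\nabla|u|^2 = \bar u\,\nabla u + u\,\nabla \bar u = i\beta \vec A_0 |u|^2 - i\beta \vec A_0 |u|^2 = 0,
\]
so $|u|$ is constant on the connected set $\Omega$; combined with the vanishing on $\{r > R_0\}$, this constant is zero and $u \equiv 0$. The main point requiring care is the $\Gamma$-boundary cancellation: unlike in Lemma \ref{l:quadform}, $\vec A_0 \cdot \nu$ is generally nonzero along $\Gamma$, so both halves of \eqref{eq:matching} must be used in tandem---the phase identity $|u_+|^2 = |u_-|^2$ to handle the magnetic-potential piece, and the conjugate normal-derivative relation to handle the $-i\partial_\nu$ piece.
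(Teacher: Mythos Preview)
Your proof is correct and follows essentially the same strategy as the paper's: a far-field expansion combined with the magnetic Green's identity on $\Omega \cap D_\rho$, with the $\Gamma$-boundary terms canceling via the matching condition \eqref{eq:matching}. The only differences are cosmetic---you exploit the sign of the $\partial D_\rho$ integral directly rather than letting $\rho\to\infty$, and you finish via $\nabla|u|^2=0$ and the vanishing of $u$ at infinity rather than the paper's unitary equivalence of $i\nabla+\beta\vec A_0$ with $i\nabla$ together with $u\to 0$ at $S$---but the core argument is the same.
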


\begin{proof}
    Let $D_\rho = \{x \in \mathbb R^2 \colon |x|<\rho\}\supset \Gamma$. In $\Real^2\setminus D_{\rho}$,
     we can write $\tilde P$  as 
    $$\tilde P = D^2_r - \frac{i}{r}D_r + \frac{1}{r^2}({D_{\theta}-\beta})^2, \quad \text{for } r>\rho.$$
    Using this, separating variables, and 
    using asymptotics of Bessel functions, the bounded solutions of $\tilde P u = 0$ are given by
    \begin{equation}
    \label{e:aux1}
        u = \sum_{k\in\mathbb{Z}} c_k r^{-{|k-\beta|}} e^{ik\theta} , \quad \text{for } r>\rho.
    \end{equation}
    Define the magnetic normal derivative as $\partial_{\nu}^{\Vec{A}}u := \nu \cdot (\nabla - i\Vec{A})u$, where $\nu$ is the outward unit normal vector at the boundary. We then have the magnetic Green's identity:
    \begin{equation}
    \label{e:mag_green}
        \int_{\Omega} (-i\nabla-\beta\Vec{A_0}) u \cdot \overline{(-i\nabla-\beta\Vec{A_0})v} = \int_\Omega (\tilde{P} u) \bar v + \int_{\partial\Omega} (\partial_{\nu}^{\beta\Vec{A}_0} u ) \bar v
    \end{equation}    
    where as before we distinguish, for $z=(x,y)\in \Gamma$, $\lim_{\epsilon \rightarrow 0^{+}} e^{\pm i \epsilon} z$.
    Then as in the proof of Lemma~\ref{l:quadform}, for $u,v\in \tilde{\mathcal{D}}$ by the matching condition \eqref{eq:matching} and magnetic Green's identity, we have
    \begin{equation}
    \label{e:aux2}
         \int_{\Omega} (-i\nabla-\beta\Vec{A_0}) u \cdot \overline{(-i\nabla-\beta\Vec{A_0}) v}  = \int_\Omega (\tilde{P} u) \bar v.
    \end{equation}
    By \eqref{e:aux1} and \eqref{e:aux2}, for solutions of $\tilde P u =0$, we have 
    \[
    \int_{\Omega \cap D_\rho} |(-i\nabla-\beta\Vec{A_0}) u|^2 = \int_{\partial D_\rho} \bar u\, (\partial_{\nu}^{\beta\Vec{A}_0} u) \, dS =  \int_{\partial D_\rho} \bar u\, (\partial_{r} u) \, dS= O(\rho^{-2|\beta|}), \qquad \text{ as } \rho \to \infty,
    \]
    which follows from the fact that the normal vector of $\partial D_\rho$ is perpendicular to $\Vec{A}_0$.
    This implies that $(i\nabla + \beta\Vec{A_0}) u$ is identically $0$ on $\Omega$. As $i\nabla + \beta\Vec{A_0}$ is unitarily equivalent to $i\nabla$ on $\Omega$ with an additional cut from $s_1$ to $\infty$ when $\beta\notin\mathbb Z$, using $u \to 0$ at points of $S$, we conclude that $u$ is identically zero.
\end{proof}

\subsection{Proof of Lemma \ref{l:rbded}}
Now we prove Lemma \ref{l:rbded}, which shows that for any $\chi \in C_c^\infty(\Real^2)$, $\| \chi R(\lambda)\chi\|_{L^2\rightarrow \mathcal{D}} $ is bounded at $0$. 

\begin{lem}\label{l:bexp}
There are operators $S_{jk},\; S'_{jk}: L^2_c(\Real^2)\rightarrow \Tilde{\mathcal{D}}_{\loc}$ and $\mu_0\geq 0$ such that near $0$
\begin{equation}\label{eq:basicR}
\tR(\lambda)= \lambda^{-\mu_0} \left( \sum_{j,k\geq 0} S_{j,k} \lambda^{2(j+k|\beta|)}+ \sum_{j\geq 0, k>0} S'_{j,k}\lambda^{2(j+k(1-|\beta|))}\right).
\end{equation}
\end{lem}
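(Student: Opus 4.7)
My plan is to invert the operator identity from Lemma \ref{l:rs1},
\begin{equation*}
\tR(\lambda)(I - F_1^\sharp D(\lambda)) = F(\lambda) D(\lambda),
\end{equation*}
by exploiting the finite rank of $F_1^\sharp$ to reduce the inversion of $I - F_1^\sharp D(\lambda)$ to a finite-dimensional linear algebra problem. Setting $V := \mathrm{range}(F_1^\sharp)$ with $\dim V = N < \infty$, the usual reduction (valid for any finite rank operator $K$ with range in $V$: if $(I-K)u=v$ and $w:=u-v\in V$, then $(I_V-K|_V)w = Kv$) gives
\begin{equation*}
(I - F_1^\sharp D(\lambda))^{-1} = I + M(\lambda)^{-1} F_1^\sharp D(\lambda), \qquad M(\lambda) := I_V - F_1^\sharp D(\lambda)|_V.
\end{equation*}
After choosing a basis of $V$, $M(\lambda)$ is an $N\times N$ matrix whose entries inherit from $D(\lambda)$ an expansion of the form \eqref{eq:pexp}; consequently so do $\det M(\lambda)$ and the entries of $\mathrm{adj}(M(\lambda))$.

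Next I would verify $\det M(\lambda) \not\equiv 0$ on a neighborhood of $0$. Since $\tilde P \ge 0$ is self-adjoint, $\tR(\lambda)$ is analytic as a map $L^2_c \to \tilde{\mathcal D}_{\loc}$ for $\im \lambda > 0$; meanwhile the series \eqref{eq:Dexp} makes $D(\lambda)$ invertible in a neighborhood of $0$, and Vodev's identity $\tR(\lambda)(I-K(\lambda)) = F(\lambda)$ together with the factorization $(I-K(\lambda))D(\lambda) = I - F_1^\sharp D(\lambda)$ shows that for $\lambda$ close to the auxiliary point $z$ (chosen with small positive imaginary part) the operator $I - F_1^\sharp D(\lambda)$ is invertible. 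Hence $\det M(\lambda)$ is not identically zero near $0$.

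Let $\mu_0 \ge 0$ be the smallest exponent occurring with nonzero coefficient in the expansion of $\det M(\lambda)$, so $\det M(\lambda) = \lambda^{\mu_0} g(\lambda)$ with $g(0)\ne 0$. A geometric-series argument then gives $1/g(\lambda)$ an expansion of the form \eqref{eq:pexp}; the essential observation is that the admissible exponent set
\begin{equation*}
\{2(j + k|\beta|) : j,k \ge 0\} \cup \{2(j + k(1-|\beta|)) : j \ge 0, k>0\}
\end{equation*}
is closed under addition, which follows from the identity $k_1|\beta| + k_2(1-|\beta|) = \min(k_1,k_2) + |k_1-k_2|\cdot \mu$ with $\mu\in\{|\beta|, 1-|\beta|\}$ determined by the sign of $k_1-k_2$. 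Hence $M(\lambda)^{-1} = \mathrm{adj}(M(\lambda))/\det M(\lambda)$ equals $\lambda^{-\mu_0}$ times a matrix-valued function with expansion of form \eqref{eq:pexp}. Substituting into
\begin{equation*}
\tR(\lambda) = F(\lambda)D(\lambda) + F(\lambda)D(\lambda)\, M(\lambda)^{-1} F_1^\sharp D(\lambda),
\end{equation*}
and using that $F(\lambda)$ and $D(\lambda)$ themselves satisfy \eqref{eq:pexp}, yields \eqref{eq:basicR} with operator coefficients $S_{jk}, S'_{jk}: L^2_c(\Real^2)\to \tilde{\mathcal D}_{\loc}$.

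The main obstacle is the bookkeeping needed to show that the expansion class \eqref{eq:pexp} is closed under sums, products, and reciprocals of elements with nonzero constant term. For irrational $|\beta|$ the exponents are distinct and totally ordered, so closure is routine. For rational $|\beta|$ (the case responsible for the finer Riemann surface structure of Theorem \ref{t:riem}) several multi-indices $(j,k)$ can produce the same exponent, so the formal series must be reorganized by the value of the exponent and convergence reverified after regrouping. Once closure of the class under these ring operations is established, the inversion of $\det M$ and the assembly of the final expansion are formal.
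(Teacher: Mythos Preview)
Your approach is essentially the paper's: reduce $I - F_1^\sharp D(\lambda)$ to a finite-dimensional matrix $M(\lambda)$, invert via the adjugate over the determinant, pull out the leading power $\lambda^{\mu_0}$, and expand the remainder by a geometric series (you are usefully more explicit than the paper about why the exponent set is closed under addition). One small point: your argument that $\det M \not\equiv 0$ tacitly assumes the auxiliary point $z$ lies in the convergence region of the series for $D$, which is not automatic since that region is determined only after $z$ is fixed; the paper sidesteps this by invoking the analytic Fredholm theorem (since $K(z)=0$, the operator $I-K(\lambda)$ is invertible off a discrete set in $\Lambda\setminus\{0\}$, hence at some $\lambda$ in the convergence region, and there $(I-K(\lambda))D(\lambda)=I-F_1^\sharp D(\lambda)$ is invertible).
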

\begin{proof}
 Our argument uses a variation on the proof of the standard analytic Fredholm Theorem; see, e.g.  \cite[Theorem VI.14]{RS} for a 
proof of the classical result, and 
\cite[Theorem~4.1]{MuSt} for a more abstract setting than ours.

From Lemma \ref{l:rs1}
it suffices to invert $I-F_1^\sharp D(\lambda)$ near $\lambda =0$.  By the usual analytic Fredholm theorem, we know that this must be invertible for all 
but isolated values of $\lambda \in \Lambda\setminus\{0\}$, since $I-K(\lambda)$ is invertible for $\lambda=z$.  Since $I-F^\sharp D(\lambda)$
 differs from the identity by a finite rank operator, it  can be inverted essentially by using Cramer's rule.   In doing so, we get a denominator  coming from a determinant of a matrix.  This 
 determinant $d(\lambda)$ has an expansion at $0$ of the form  $d(\lambda)=\sum_{j,k\geq 0} d_{jk}\lambda^{2(j+k|\beta|)}+\sum _{j\geq 0,k>0} d'_{jk} \lambda^{2(j+k(1-|\beta|))}$.  Since this 
 determinant is not identically zero,
 there is a unique $\mu_0\geq 0$ so that $\lim_{\lambda \rightarrow 0} d(\lambda)\lambda^{-\mu_0}$ is finite and nonzero.  We call this  limit  $d_0$, with $d_0 \ne 0$. Then 
 $$\frac 1 {d(\lambda)}=\frac 1 {\lambda^{\mu_0}d_0} \frac 1 {(1+d_0^{-1}(\lambda^{-\mu_0}d(\lambda)-d_0))}=\frac 1 {\lambda^{\mu_0}}\bigg( \sum_{j,k\geq 0} f_{jk}\lambda^{2(j+k|\beta|)}+\sum _{j\geq 0,k>0} f'_{jk} \lambda^{2(j+k(1-|\beta|))}\bigg)$$
 for some constants $f_{jk},\; f_{jk}'.$
 Using this and \eqref{eq:Dexp}, $I-K(\lambda)$ has an inverse $D(\lambda)(I-F^\sharp_1 D(\lambda))^{-1}$ which has an expansion of the form \eqref{eq:pexp}.  
 Since by \eqref{e:vodevid}, $\tR(\lambda)=F(\lambda)(I-K(\lambda))^{-1}$, and \eqref{eq:F} and Lemma \ref{l:modelr} give an expansion in $\lambda$ of $F$, this proves the lemma.
 \end{proof}

The $\mu_0$ in the previous lemma gives an upper bound on what might, in analogy with meromorphic functions, be called the order of 
the singularity at $0$ of $\tR(\lambda)$.  It is possible, however, that $\tR(\lambda)$ is not that singular at $0$.    Define
$\mu_1\geq 0$ by 
\begin{equation}\label{eq:mu1}
\mu_1=\inf\{ \mu\in \Real: \; \lim_{\lambda\rightarrow 0} \| \lambda^\mu \chi \tR(\lambda)\chi \| <\infty\; \text{for any $\chi \in C_c^\infty(\Real^2)$}\} .
\end{equation}
Note that $\mu_1\leq \mu_0$ and that by Lemma \ref{l:bexp} $\lim_{\lambda\rightarrow 0} \lambda^{\mu_1}\tR(\lambda)\not =0$.
\begin{lem}\label{l:singterm}
If $\mu_1>0$ and $U$ is in the range of $\lim_{\lambda\rightarrow 0} (\lambda^{\mu_1} \tR(\lambda))$, then $U\in \Tilde{\mathcal{D}}_{\loc}$, $\Tilde{P} U=0$ and $U(x)=o(1)$ as $|x|\rightarrow \infty$. 
\end{lem}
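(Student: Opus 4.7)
The plan is to establish the three conclusions — $\tilde P U = 0$, $U \in \tilde{\mathcal D}_{\loc}$, and the decay at infinity — separately, working from the defining representation $U = \lim_{\lambda \to 0} \lambda^{\mu_1} \tR(\lambda) f$ for some $f \in L^2_c(\Real^2)$. By \eqref{eq:mu1} and the expansion in Lemma~\ref{l:bexp}, this limit exists in $L^2_{\loc}$ (the coefficient attached to the exponent $-\mu_1$ being the leading nonvanishing one in the expansion).

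The first two conclusions should be comparatively routine. For $\tilde P U = 0$, I would apply $(\tilde P - \lambda^2)$ to $\lambda^{\mu_1} \tR(\lambda) f$, obtaining $\lambda^{\mu_1} f$; this tends to zero because $\mu_1 > 0$, and $\lambda^2 \cdot \lambda^{\mu_1} \tR(\lambda) f$ also vanishes since the second factor is locally bounded. For $U \in \tilde{\mathcal D}_{\loc}$, each cutoff approximant $\chi \lambda^{\mu_1} \tR(\lambda) f$ (for $\chi \in C_c^\infty(\Real^2)$) lies in $\tilde{\mathcal D}$, and the uniform boundedness of both $\|\chi \lambda^{\mu_1} \tR(\lambda) f\|_{L^2}$ and $\|\tilde P(\chi \lambda^{\mu_1} \tR(\lambda) f)\|_{L^2}$ lets me extract a weak $\tilde{\mathcal D}$-limit that must agree with $\chi U$; the matching conditions \eqref{eq:matching} on $\Gamma$ and vanishing at $S$ pass to the limit.

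The main substance — and the hardest step — is the decay claim. My plan is to use separation of variables on $\{|x| > R\}$ with $R$ chosen so that $\supp f \cup \supp \chi_1 \subset \{|x| < R\}$. There $\tilde P$ coincides with $P_\beta$ and the source vanishes, so $(P_\beta - \lambda^2) \tR(\lambda) f = 0$. For $\im \lambda > 0$, $\tR(\lambda) f \in L^2$ forces an expansion purely in outgoing Hankel modes,
\[
 \tR(\lambda) f(r,\theta) = \sum_{k \in \Integers} A_k(\lambda) H^{(1)}_{\nu_k}(\lambda r)\, e^{i k \theta}, \qquad r > R,
\]
which persists by meromorphic continuation in $\lambda$. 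Using the decomposition $H^{(1)}_{\nu}(z) = z^{-\nu} \Phi_-(z) + z^{\nu} \Phi_+(z)$ with $\Phi_\pm$ even entire and $\Phi_\pm(0) \ne 0$ (extracted from the series for $J_{\pm\nu}$ used in the proof of Lemma~\ref{l:modelr}), the existence of a finite limit for $\lambda^{\mu_1} \tR(\lambda) f$ forces $A_k(\lambda) = O(\lambda^{\nu_k - \mu_1})$, so the $r^{\nu_k}$ contribution acquires an extra factor $\lambda^{2\nu_k}$ that vanishes. This yields
\[
 U(r, \theta) = \sum_{k \in \Integers} d_k\, r^{-\nu_k} e^{i k \theta}, \qquad r > R,
\]
and each mode decays because $\nu_k = |k + \beta| \geq \mu_m > 0$ when $\beta \notin \Integers$.

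Upgrading mode-by-mode decay to the uniform bound $\sup_\theta |U(r,\theta)| \to 0$ is where I expect to need the most care. Since $U \in L^2_{\loc}$ by the earlier step, Parseval at any fixed $r_0 > R$ gives $\sum_k |d_k|^2 r_0^{-2\nu_k} < \infty$, and Cauchy--Schwarz then controls
\[
 \sum_k |d_k|\, r^{-\nu_k} \leq \bigg( \sum_k |d_k|^2 r_0^{-2\nu_k} \bigg)^{1/2} \bigg( \sum_k (r_0/r)^{2\nu_k} \bigg)^{1/2}.
\]
Because $\nu_k$ grows linearly in $|k|$, for $r \geq 2 r_0$ the second sum is dominated by the convergent series $\sum_k (1/2)^{2\nu_k}$, so dominated convergence (with $(r_0/r)^{2\nu_k} \to 0$ pointwise in $k$) yields $\sum_k (r_0/r)^{2\nu_k} \to 0$, which delivers the uniform decay and completes the argument.
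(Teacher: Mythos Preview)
Your argument is correct, and for the first two claims it is essentially the same as the paper's. For the decay at infinity, however, you take a genuinely different route.

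The paper obtains the decay by revisiting Vodev's identity \eqref{e:vodevido}, this time fixing $\lambda$ with $\im\lambda>0$ and expanding in the auxiliary parameter $z$ near $0$. Reading off the coefficient of $z^{-\mu_1}$ expresses $S_{-\mu_1}=\lim_{\lambda\to 0}\lambda^{\mu_1}\tR(\lambda)$ as a sum of terms each of which, when applied to a compactly supported function, either decays like $e^{-|x|\im\lambda}$ (the $\tR(\lambda)$ and $R_\beta(\lambda)$ pieces) or is $R_{00}$ applied to a compactly supported function. Lemma~\ref{l:modatinfty} then supplies the decay of the $R_{00}$ piece.

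Your approach instead works directly with the outgoing Hankel expansion of $\tR(\lambda)f$ on $\{r>R\}$, passes to the limit mode by mode to obtain $U(r,\theta)=\sum_k d_k r^{-\nu_k}e^{ik\theta}$, and then uses a Parseval/Cauchy--Schwarz estimate to sum the modes. This is more elementary and entirely self-contained---it does not invoke Lemma~\ref{l:modatinfty} or the second use of Vodev's identity---but it requires the additional summability step to upgrade mode-wise decay to uniform decay, which the paper's argument gets for free from the integral representation of $R_{00}$. One small point worth tightening: the assertion that the finite limit ``forces $A_k(\lambda)=O(\lambda^{\nu_k-\mu_1})$'' is really the ratio argument $U_k(r_1)/U_k(r_2)=\lim_{\lambda\to 0}H^{(1)}_{\nu_k}(\lambda r_1)/H^{(1)}_{\nu_k}(\lambda r_2)=(r_1/r_2)^{-\nu_k}$ at two radii, using that convergence in $\tilde{\mathcal D}_{\loc}$ (hence $H^2_{\loc}$ away from $\Gamma$) gives pointwise convergence; you might state it that way.
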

\begin{proof}  That $U\in \Tilde{\mathcal{D}}_{\loc}$ follows from Lemma \ref{l:bexp}.   Using Lemma \ref{l:bexp} shows that $\tR(\lambda)$ has an expansion at $0$ and the identity 
$(\tilde P-\lambda^2)\tR(\lambda)=I$ yields from the coefficient of $\lambda^{-\mu_1}$ that $\tilde P U=0$.

Set $S_{-\mu_1}=\lim_{\lambda\rightarrow 0} (\lambda^{\mu_1} \tR(\lambda))$.    We shall  use 
\eqref{e:vodevido}, a variant of Vodev's identity.
  This time, though, we shall fix $\lambda$ with $\im \lambda>0$ and expand $R_\beta(z)$, $\tR(z)$ in $z$ near $z=0$.  Then from the coefficient of $z^{-\mu_1}$ in
\eqref{e:vodevido} 
we get
$$-S_{-\mu_1}=\lambda^2\tR(\lambda) \chi_1 (2-\chi_1)S_{-\mu_1} - \{ 1-\chi_1+\tilde R(\lambda)[P_\beta,\chi_1]\}(R_\beta(\lambda)-R_{00})[P_\beta,\chi_1]S_{-\mu_1}.$$
Using that $(\tR(\lambda)\phi)(x),\;(R_\beta(\lambda)\phi)(x)=O(e^{-|x|\im \lambda})$ as $|x|\rightarrow \infty$ for any $\phi\in L^2_c(\Real^2)$ yields
$$(S_{-\mu_1}\phi)(x)= (R_{00}[P_\beta,\chi_1]S_{-\mu_1}\phi)(x)+ O(e^{-|x|\im \lambda}).$$
Observing that Lemma \ref{l:modatinfty} implies that  $(R_{00}\psi)(x)$ is decaying as $|x|\rightarrow \infty$ for any $\psi \in L^2_c(\Real^2)$ completes the proof. 
\end{proof}

These two lemmas allow us the prove the main result of this subsection.

\begin{proof}[Proof of Lemma \ref{l:rbded}]
Suppose  that
 $\lim\sup_{\lambda \rightarrow 0}\| \chi \tR(\lambda)\chi\|_{L^2\rightarrow L^2}=\infty$ for some $\chi\in C_c^\infty(\Real^2)$.  Then {Lemma~\ref{l:bexp} implies that} the $\mu_1$ defined in \eqref{eq:mu1} satisfies $\mu_1>0$, and by Lemma 
\ref{l:singterm} there is a $U\in \tilde{\mathcal{D}}_{\loc}$ with $\tilde P U=0$ and $U(x)=o(1)$ as $|x|\rightarrow \infty$.  But then this contradicts Lemma \ref{l:nores}, so that $\lim\sup_{\lambda \rightarrow 0}\| \chi \tR(\lambda)\chi\|_{L^2\rightarrow L^2}$ is finite.  By Lemma \ref{l:bexp} the limit must exist.

To show that $\lim_{\lambda \rightarrow 0}\| \chi \tR(\lambda)\chi\|_{L^2\rightarrow \tilde{\mathcal{D}}}$ is finite, note that $\tilde{P} \tR(\lambda)=I+\lambda^2\tR(\lambda)$. Thus 
\begin{equation}
    \|\tilde{P}\chi \tR(\lambda)\chi\|_{L^2\to L^2} \leq \|[\tilde{P},\chi] \tR(\lambda)\chi\|_{L^2\to L^2} + \|\chi (I+\lambda^2\tR(\lambda))\chi\|_{L^2\to L^2}<\infty.
\end{equation}
Lemma \ref{l:rbded} then follows from taking the unitary conjugation. 
\end{proof}

\end{document}